\newtheorem{theorem}{Theorem}[section]
\newtheorem{lemma}[theorem]{Lemma}
\newtheorem{proposition}[theorem]{Proposition}
\newtheorem{corollary}[theorem]{Corollary}
\newtheorem{example}[theorem]{Example}
\theoremstyle{definition}
\newtheorem{definition}[theorem]{Definition}
\theoremstyle{remark}
\newtheorem{remark}[theorem]{Remark}
\definecolor{A}{rgb}{.75,1,.75}
\numberwithin{equation}{section}
\numberwithin{equation}{section}
\newcommand{\R}{\mathbb R}
\newcommand{\C}{\mathbb C}
\newcommand{\Z}{\mathbb Z}
\newcommand{\g}{{\mathfrak{g}}}
\newcommand{\gl}{{\mathfrak{gl}}}
\newcommand{\glmn}{\mathfrak{gl}(m|n)}
\newcommand{\lv}{\mathfrak{l}}
\newcommand{\h}{\mathfrak{h}}
\newcommand{\uu}{\mathfrak{u}}
\newcommand{\mc}{\mathcal}
\newcommand{\mf}{\mathfrak}
\newcommand{\ind}{{\rm Ind} }
\newcommand{\res}{{\rm Res} }
\newcommand{\la}{\lambda}
\newcommand{\n}{\mf{n}}
\newcommand{\OO}{\mc O}
{\vskip-\lastskip\medskip
  \noindent
  {\em #1.}\enspace
  }%
{\qed\par\medskip
  }
\begin{document}

\title[]{Equivalence of blocks for the general linear Lie superalgebra}

\author[Cheng]{Shun-Jen Cheng}
\address{Institute of Mathematics, Academia Sinica, Taipei,
Taiwan 10617} \email{chengsj@math.sinica.edu.tw}

\author[Mazorchuk]{Volodymyr Mazorchuk}
\address{Department of Mathematics, Uppsala University, Box 480, SE-751 06, Uppsala, Sweden}
\email{mazor@math.uu.se}

\author[Wang]{Weiqiang Wang}
\address{Department of Mathematics, University of Virginia,
Charlottesville,VA 22904, USA} \email{ww9c@virginia.edu}

\begin{abstract}
We develop a reduction procedure which provides an equivalence (as highest weight
categories) from
an arbitrary block (defined in terms of the central character and the integral Weyl
group) of the BGG category $\OO$ for a
general linear Lie superalgebra to an integral block of $\OO$ for (possibly a direct sum
of)
general linear Lie superalgebras. We also establish indecomposability of blocks of
$\OO$.
\end{abstract}


\keywords{General linear Lie superalgebra, BGG category, block decomposition}

\maketitle

\let\thefootnote\relax\footnotetext{{\em 2010 Mathematics Subject Classification.} Primary 17B10.}


\section{Introduction}

Recently, Brundan's Kazhdan-Lusztig-type conjecture \cite{Br} on
irreducible and tilting characters of the general linear Lie
superalgebra $\glmn$ in terms of canonical bases of Fock spaces has been
established by the first and third authors jointly with Ngau Lam
\cite{CLW}. In these papers only the integral part of category $\OO$
for $\glmn$ was considered. A natural next step is to understand
the structure of a block for an arbitrary weight and ask for a
possible reduction via equivalence to an integral weight block.
In the setting of semisimple Lie algebras, this was achieved by
Soergel \cite{Soe} by establishing an equivalence between a general
block and an integral block associated to the so-called integral Weyl group.
However, the approach of Soergel is not directly applicable to Lie superalgebras,
since blocks of $\OO$ for Lie superalgebras are not controlled by Coxeter groups in general.

There is an alternative approach in the case of Lie algebras developed by
Mathieu \cite{Ma} and Kashiwara-Tanisaki \cite{KT}. It uses the localization of
the universal enveloping algebra with respect to certain root elements and polynomial
extensions of conjugation automorphisms for this localization. In this note we
develop yet another approach to this problem based on twisting functors from \cite{Ar}.
The first step in the definition of
twisting functors uses, similarly to the approach of Mathieu and Kashiwara-Tanisaki, the localization of
the universal enveloping algebra with respect to certain root elements. The difference is in the
second step where, instead of polynomial extensions of conjugation automorphisms for the localization,
one uses Arkhipov's twisting trick from \cite{Ar}. These twisting functors admit straightforward
generalizations to Lie superalgebras and so allow us to establish, for Lie
algebras and superalgebras of type $A$, an explicit equivalence (as highest weight categories)
between a block of $\OO$ for an arbitrary weight and an integral block of $\OO$ for a certain
subalgebra, where the equivalence is built upon parabolic induction and twisting functors.

Let us explain our approach in more detail. Let $\g$ be a Lie algebra of ADE type. For a weight $\la$, consider the corresponding integral subalgebra $\g_{[\la]}$ of $\g$ (see \eqref{defn:g:lambda}). If $\g_{[\la]}$ is a Levi subalgebra, then the usual parabolic induction functor gives the desired equivalence of blocks, with the block for (the semisimple part of) $\g_{[\la]}$ being integral. Next we show that a twisting functor associated to a simple root $\alpha$ such that $\langle \la, \alpha^\vee \rangle \not \in \Z$ is always an equivalence (as highest weight categories) of blocks. It remains to observe that for a type A Lie algebra it is always possible to find a sequence of such twisting functors that connects an arbitrary block with a block for which the integral subalgebra is Levi.

Now we are ready to generalize the above approach to the Lie
superalgebra $\g =\glmn$. One ``super phenomenon'' is the existence of
non-conjugate Borel subalgebras of $\g$. Given two Borel subalgebras
$\mf b$ and $\mf b'$ that are related by an odd reflection with
respect to an isotropic odd simple root, it is known that the
categories $\OO$ relative to $\mf b$ and $\mf b'$ coincide, but the identity functor
does not preserve the structure of highest weight categories in general
\cite[Section~6]{CLW}. This statement is easily seen to be valid for
the version of category $\OO$ with arbitrary (i.e., non-integral)
weights. We show here that the block equivalence with respect
to an odd reflection associated to an isotropic odd simple root $\alpha$
is indeed an equivalence of highest weight categories
under the assumption that $\langle \la, \alpha^\vee \rangle \not \in \Z$. In the
case of $\glmn$, we show that it is possible to use a sequence of
twisting functors and such odd reflections to reduce to the setting of an
integral subalgebra of Levi type. The same type of parabolic induction functor
as in the case of semisimple Lie algebras completes the reduction
process. Finally, we show that a block as defined via its central character
and integral Weyl group is indeed indecomposable just as for semisimple Lie algebras.

The above strategy works for a large set of, but not all,
non-integral weights for other types of simple and Kac-Moody Lie
(super)algebras; compare with Fiebig \cite{Fie}.

This note is organized as follows. In Section~\ref{sec:LA}, we develop our approach in the Lie algebra setting.
The block equivalences via parabolic induction and twisting functors are formulated and established.
In Section~\ref{sec:superLA}, we generalize it to the Lie superalgebra setting.
For basics on category $\OO$ we refer to the book of Humphreys \cite{Hu}, and for basics on
Lie superalgebras we refer to the new book \cite{CWbook}.

\vspace{.2cm}
{\bf Acknowledgement.} The first author is partially supported by an
NSC grant of the R.O.C. The second author is partially supported by
the Royal Swedish Academy of Sciences and the Swedish Research Council.
The third author is partially supported by NSF
DMS-1101268, and he thanks T.~Tanisaki for some helpful
correspondence. We thank Kevin Coulembier for pointing out an oversight
in the original formulation of Theorem~\ref{th:twist}.

\section{Equivalence of blocks for Lie algebras}

\label{sec:LA}

\subsection{The setup}\label{sec:setup}

Let $\g$ be a complex semisimple or reductive Lie algebra with a
Cartan subalgebra $\h$, root system $\Phi$, and Weyl group $W$.
We denote by $U(\g)$ the universal enveloping algebra of $\g$
and by $Z(\g)$ the center of $U(\g)$. Let $\Pi$ be a simple system
of $\Phi$ and denote by $\Phi^+$ the corresponding set of positive
roots. For $\alpha\in\Phi$, we let $\alpha^\vee$ stand for the
corresponding coroot. Denote by $\rho$ the half of the sum of all positive roots,
and consider the usual dot- (i.e., $\rho$-shifted) action of the Weyl group
given by $w \cdot \la
=w(\la+\rho) -\rho$, for $w\in W$ and $\la \in \h^*$. We have the
corresponding triangular decomposition $\g =\n^- \oplus \h \oplus
\n$, where $\mf{n}=\bigoplus_{\alpha\in\Phi^+}\g_\alpha$ and
$\mf{n}^-=\bigoplus_{\alpha\in-\Phi^+}\g_\alpha$. Let ${}^{\g}\OO$
be the Bernstein-Gelfand-Gelfand (BGG) category of $\g$-modules
(see \cite{BGG} or \cite[Chapter 1]{Hu}). We shall drop the superscript
$\g$ and write simply $\OO$, when $\g$ is clear from the context.

For each $\la \in \h^*$ we have the integral root system
$\Phi_{\la} =\{\alpha \in \Phi \mid \langle \la, \alpha^\vee \rangle
\in \Z\}$ associated to $\la$ and the corresponding
integral Weyl group $W_{\la} $, which is the subgroup of $W$ generated by
all reflections $s_\alpha$, $\alpha \in \Phi_{\la}$. The weight $\la$ is
called integral if $W=W_{\la}$.
When $\g$ is of ADE type, we define
\begin{align}\label{defn:g:lambda}
\g_{[\la]} =\h \oplus \bigoplus_{\alpha \in \Phi_{\la}} \g_\alpha,
\end{align}
which is a Lie subalgebra of $\g$, called the {\em integral
subalgebra} associated to $\la$. If $\g$ is not of ADE type, then
$\g_{[\la]}$ may not be closed under the Lie bracket.

For $\la\in\h^*$ we denote by $M(\lambda)$ the Verma module with highest weight
$\lambda$, by $L(\lambda)$ the unique simple quotient of $M(\lambda)$
and by $\chi_\la:Z(\g)\to\mathbb{C}$ the central character of $M(\lambda)$ (see \cite[Chapter~1]{Hu}).
Denote by $\OO_{\chi_\la}$ the Serre subcategory of $\OO$ generated by the simple objects
$L(w\cdot \la)$, $w\in W$ (these are exactly the simple objects of $\OO$ with central character $\chi_\la$,
see \cite[Section~1.10]{Hu}).
Then the action of $Z(\g)$ gives rise to a decomposition of $\mc O$
as follows: $\mc O=\bigoplus_{\la\in\h^*/(W,\cdot)}\mc O_{\chi_\la}$.
If $\la$ is integral, then $\mc O_{\chi_\la}$ is indecomposable (see \cite[Section~1.13]{Hu}). However,
$\mc O_{\chi_\la}$ is decomposable in general.

Denote by $\OO_{\la}$ the Serre subcategory of $\OO$ generated by
$L(w\cdot \la)$, $w\in W_{\la}$. Define an
equivalence relation $\sim$ on $\h^*$ by declaring $\la\sim\mu$ if $\mu\in
W_{\la}\cdot\la$. Let $\h^*_{\mathrm{dom}}$ denote the set of all dominant weights
with respect to the dot-action, that is the set of all $\lambda$ such that $\langle\lambda+\rho,\alpha^{\vee}\rangle
\not\in\mathbb{Z}^{<0}$ for all $\alpha\in\Phi^+$ (see \cite[Section~3.5]{Hu}). Each such dominant
$\lambda$ is the maximum element in $W_{\la}\cdot\la$ with respect to the natural order $\leq$ on $\h^*$
(given by $\mu\leq \nu$ if and only if $\nu-\mu\in \mathbb{Z}^{\geq 0}\Phi^+$). Then
we have a refined decomposition of $\mc O$
into {\em indecomposable} blocks (see \cite[Theorem 4.9]{Hu}): $\mc O=\oplus_{\la\in\h^*_{\mathrm{dom}}}\mc O_{\la}$.

\subsection{Twisting functors}\label{sec2.2}

For $\alpha\in\Pi$ fix a nonzero $X\in\mathfrak{g}_{-\alpha}$ and let $U'_{\alpha}$ be the
(Ore) localization of $U(\g)$ with respect to powers of $X$. Then $U(\g)$ is a subalgebra of the
associative algebra $U'_{\alpha}$ and the quotient $U_{\alpha}:=U'_{\alpha}/U(\g)$ has the
induced structure of a $U(\g)\text{-}U(\g)$--bimodule. Let $\varphi=\varphi_{\alpha}$ be an
automorphism of $\mathfrak{g}$ that maps $\mathfrak{g}_{\beta}$ to $\mathfrak{g}_{s_{\alpha}(\beta)}$
for all $\beta\in\Pi$, where $s_{\alpha}$ is the simple reflection corresponding to $\alpha$.
Finally, consider the bimodule ${}^{\varphi}U_{\alpha}$, which is obtained from
$U_{\alpha}$ by twisting the left action of $U(\g)$ by $\varphi$
(i.e. $a\cdot u\cdot b:=\varphi(a)ub$ for all $a,b\in U(\g)$ and $u\in U_{\alpha}$). Tensoring with
${}^{\varphi}U_{\alpha}$ defines an endofunctor $T_{\alpha}$ of $\mc O$,
called the {\em twisting functor}. This functor was originally defined
by Arkhipov in \cite{Ar} and further investigated in more detail in \cite{HAS,KM}.

Let $\mathcal{D}^b(\mc O)$ denote the bounded derived category of $\mc O$ and let
$\mathcal{L}T_{\alpha}:\mathcal{D}^b(\mc O)\to \mathcal{D}^b(\mc O)$ be the corresponding
left derived endofunctor. Furthermore, for $i\geq 0$, let $\mathcal{L}_iT_{\alpha}:\mc O\to\mc O$
denote the $i$-th cohomology of $\mathcal{L}T_{\alpha}$.
Let us recall some basic properties of $T_{\alpha}$ (see, e.g.,~\cite{HAS}):
\begin{enumerate}[$($I$)$]
\item\label{twist1} $T_{\alpha}$ is right exact;
\item\label{twist2} $T_{\alpha}$ preserves $\OO_{\chi_\la}$, for each $\la \in \h^*$;
\item\label{twist3} $\mathcal{L}_iT_{\alpha}=0$ for $i>1$;
\item\label{twist4} $\mathcal{L}_1T_{\alpha}$ is isomorphic to the
functor of taking the maximal submodule on which the action of
$\mathfrak{g}_{-\alpha}$ is locally nilpotent.
\end{enumerate}

\subsection{Equivalences using twisting functors}

Let $\la \in \h^*$ and $\alpha\in\Pi$ be such that
$\langle \la, \alpha^\vee \rangle \not\in \Z$. Denote by $\OO_{\chi_\la}^{(\alpha)}$ the Serre subcategory of
$\OO_{\chi_\la}$ generated by all $L(w\cdot \la)$, $w\in W$, such that 
$\langle w\cdot \la, \alpha^\vee \rangle \not\in \Z$ (the set of all $w$ having the latter property will
be denoted $W(\lambda,\alpha)$). Then $\OO_{\chi_\la}^{(\alpha)}$ is a direct summand of 
$\OO_{\chi_\la}$.

\begin{theorem} \label{th:twist}
Let $\la \in \h^*$ and $\alpha\in\Pi$ be such that
$\langle \la, \alpha^\vee \rangle \not\in \Z$.
\begin{enumerate}[$($a$)$]
\item\label{th:twist.1}
The functor $T_\alpha: \OO_{\chi_\la}^{(\alpha)} \longrightarrow \OO_{\chi_\la}^{(\alpha)}$ is
an autoequivalence (of highest weight categories)
sending $M(w\cdot \la)$ to $M(s_\alpha w\cdot \la)$ and $L(w\cdot \la)$ to $L(s_\alpha w\cdot \la)$
for all $w\in W(\lambda,\alpha)$.
\item\label{th:twist.2}
For any $\mu\in W(\lambda,\alpha)\cdot \la$, the functor $T_\alpha$ induces an equivalence between
$\OO_{\mu}$ and $\OO_{s_{\alpha}\cdot\mu}$.
\end{enumerate}
\end{theorem}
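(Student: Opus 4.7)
The plan is to prove part (a) using three standard ingredients: Arkhipov's identity $T_\alpha M(\mu)\cong M(s_\alpha\cdot\mu)$ (valid for every $\mu\in\h^*$, from \cite{Ar,HAS,KM}), the vanishing of $\mathcal{L}_1T_\alpha$ under the non-integrality hypothesis, and the five-lemma applied to the adjoint pair $(T_\alpha,G_\alpha)$; part (b) will then follow by permutation of indecomposable block summands. First observe that $(s_\alpha w)\cdot\la=s_\alpha\cdot(w\cdot\la)$ and the direct computation $\langle s_\alpha w\cdot\la,\alpha^\vee\rangle=-\langle w\cdot\la,\alpha^\vee\rangle-2\notin\Z$ shows that $W(\la,\alpha)$ is stable under left multiplication by $s_\alpha$; Arkhipov's identity then delivers the desired $T_\alpha M(w\cdot\la)\cong M(s_\alpha w\cdot\la)$ for $w\in W(\la,\alpha)$.

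To deduce exactness on $\OO_{\chi_\la}^{(\alpha)}$, apply property \eqref{twist4}: $\mathcal{L}_1T_\alpha L(\mu)$ equals the maximal submodule of $L(\mu)$ on which $\mathfrak{g}_{-\alpha}$ acts locally nilpotently, and for a simple highest weight module the existence of such a submodule forces $\langle\mu,\alpha^\vee\rangle\in\mathbb{Z}^{\geq 0}$ (local nilpotence of $f_\alpha$ being equivalent to local finiteness of the $\alpha$-$\mathfrak{sl}_2$-subalgebra on a simple module). This is excluded by hypothesis, so $\mathcal{L}_1T_\alpha L(w\cdot\la)=0$ for all $w\in W(\la,\alpha)$. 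Combined with property \eqref{twist3} and the long exact sequence of derived functors, the vanishing propagates through short exact sequences, and induction on composition length yields $\mathcal{L}_1T_\alpha\equiv 0$ on $\OO_{\chi_\la}^{(\alpha)}$; hence $T_\alpha$ is exact there.

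Finally, invoke the right adjoint $G_\alpha$ of $T_\alpha$ from \cite{KM,HAS}. A dual argument shows $G_\alpha$ is also exact on $\OO_{\chi_\la}^{(\alpha)}$ and satisfies $G_\alpha M(\mu)\cong M(s_\alpha\cdot\mu)$ there, so the adjunction unit $\eta\colon\mathrm{id}\to G_\alpha T_\alpha$ is an isomorphism on every Verma $M(w\cdot\la)$; because both functors are exact and every object of $\OO_{\chi_\la}^{(\alpha)}$ admits a finite presentation by Verma-filtered projectives, the five-lemma extends $\eta$ to the whole subcategory, and the symmetric analysis of the counit $T_\alpha G_\alpha\to\mathrm{id}$ produces the autoequivalence. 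Length preservation under this equivalence then forces $T_\alpha L(w\cdot\la)\cong L(s_\alpha w\cdot\la)$, as the unique simple quotient of the surjection $M(s_\alpha w\cdot\la)\twoheadrightarrow T_\alpha L(w\cdot\la)$ obtained from right exactness. For part (b), each $\OO_\mu$ with $\mu\in W(\la,\alpha)\cdot\la$ is an indecomposable direct summand of $\OO_{\chi_\la}^{(\alpha)}$ (the set $W(\la,\alpha)$ being a union of right $W_\la$-cosets, since for $v\in W_\la$ the element $v\cdot\la-\la$ lies in the integral root lattice and so pairs integrally with $\alpha^\vee$); the autoequivalence $T_\alpha$ must permute such indecomposable summands, and the action on simples $T_\alpha L(y\cdot\mu)\cong L(s_\alpha y\cdot\mu)$ identifies the image of $\OO_\mu$ as $\OO_{s_\alpha\cdot\mu}$. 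The main obstacle in this plan is the dual vanishing $\mathcal{R}^1G_\alpha\equiv 0$ on $\OO_{\chi_\la}^{(\alpha)}$ and the resulting extension of $\eta$ from Verma modules to arbitrary objects: the Verma-level identities are transparent, but upgrading the adjunction unit via dévissage really does require the simultaneous exactness of $T_\alpha$ and $G_\alpha$ plus sufficient Verma-filtered resolutions.
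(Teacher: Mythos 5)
Your overall strategy (exactness from the non-integrality of $\langle\la,\alpha^\vee\rangle$, the behaviour on Verma modules, and then the adjoint pair plus d\'evissage) is close in spirit to the paper's, and several pieces are correct: the computation $\langle s_\alpha\cdot\mu,\alpha^\vee\rangle=-\langle\mu,\alpha^\vee\rangle-2$ showing $W(\la,\alpha)$ is $s_\alpha$-stable, the vanishing of $\mathcal{L}_1T_\alpha$ on the relevant simples via Property (IV) and its propagation by d\'evissage, and the deduction of (b) from (a). But there is a genuine gap at the pivot of your argument: you assert that ``a dual argument'' gives $G_\alpha M(\mu)\cong M(s_\alpha\cdot\mu)$ and call the Verma-level identities ``transparent.'' They are not. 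The right adjoint is $K_\alpha\cong\mathbf{d}T_\alpha\mathbf{d}$, and duality converts Verma modules into \emph{dual} Verma modules, so the dual argument yields $K_\alpha\,\nabla(\mu)\cong\nabla(s_\alpha\cdot\mu)$, not the statement for standard modules. Without knowing $G_\alpha$ on Vermas (equivalently, $T_\alpha$ on costandard modules) you cannot show the unit $\mathrm{id}\to G_\alpha T_\alpha$ is an isomorphism on Verma modules, and the whole five-lemma extension collapses. Trying to prove that identity directly essentially amounts to proving the theorem. A second, smaller omission: your endgame needs that $T_\alpha$ annihilates no object of $\OO_{\chi_\la}^{(\alpha)}$ (otherwise ``unique simple quotient of $M(s_\alpha w\cdot\la)\twoheadrightarrow T_\alpha L(w\cdot\la)$'' could be the zero module), and you never establish this.

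The paper closes exactly this hole by computing $T_\alpha$ on \emph{simple} modules directly, before any adjunction argument: a support analysis of the localization shows $s_\alpha w\cdot\la$ is a highest weight of $T_\alpha L(w\cdot\la)$ (in particular $T_\alpha$ kills nothing), and the $\mathfrak{sl}(2)$-reduction of \cite[Lemma~3]{KM} together with a character comparison forces $T_\alpha L(w\cdot\la)\cong L(s_\alpha w\cdot\la)$. Because simple modules are fixed by $\mathbf{d}$, the identity $K_\alpha\cong\mathbf{d}T_\alpha\mathbf{d}$ then immediately shows that $K_\alpha$ also sends simples to simples, and the induction-on-length/Short-Five-Lemma d\'evissage is run on composition series rather than on Verma flags, which is precisely what makes the adjoint-pair argument close. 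If you want to keep your Verma-based d\'evissage, you must first prove $T_\alpha\nabla(\mu)\cong\nabla(s_\alpha\cdot\mu)$ on $\OO_{\chi_\la}^{(\alpha)}$ (e.g.\ again via the $\mathfrak{sl}(2)$-reduction, using that in the $\alpha$-generic direction the relevant $\mathfrak{sl}(2)$-Vermas are simple and self-dual); as written, that step is missing.
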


\begin{proof}
Since $\langle \la, \alpha^\vee \rangle \not\in \Z$, the classical theory of highest weight modules over
$\mathfrak{sl}(2)$ (see, e.g., \cite[Chapter~3]{Maz}) implies that the action
of $\mathfrak{g}_{-\alpha}$ is injectively on each module from $\OO_{\chi_\la}^{(\alpha)}$.
Therefore Properties \eqref{twist3} and \eqref{twist4} imply that 
$T_\alpha: \OO_{\chi_\la}^{(\alpha)} \rightarrow \OO_{\chi_\la}^{(\alpha)}$ is exact.
The functor $T_{\alpha}$ maps Verma modules to Verma
modules (this follows from, e.g., \cite[Lemma~3]{KM}).

Let us now check that $T_\alpha$ sends $L(w\cdot \la)$ to $L(s_\alpha w\cdot \la)$.
Set $M:=T_\alpha\,L(w\cdot \la)$. We claim that $s_\alpha w\cdot \la$ is a highest weight
of $M$. Indeed, if we localize $L(w\cdot \la)$ and then factor out the copy of $L(w\cdot \la)$ in
the localization, the resulting module will have a one-dimensional weight space of weight
$w\cdot \la+\alpha$. The twisting procedure maps $w\cdot \la+\alpha$
to $s_\alpha w\cdot \la$ and thus $s_\alpha w\cdot \la\in\mathrm{supp}(M)$. By construction,
$(s_\alpha w\cdot \la)+\alpha\not\in \mathrm{supp}(M)$. Since $\varphi_{\alpha}$ leaves the set of
all positive roots different from $\alpha$ invariant, it follows that
$(s_\alpha w\cdot \la)+\beta\not\in \mathrm{supp}(M)$ for any $\beta\in\Phi^+\setminus\{\alpha\}$.
Hence $s_\alpha w\cdot \la$ is a highest weight of $M$. In particular,
$M\neq 0$ (i.e., $T_\alpha$ does not annihilate any modules).

Denote by $\mathfrak{a}$ the $\mathfrak{sl}(2)$-subalgebra of $\mathfrak{g}$ generated by
$\mathfrak{g}_{\pm\alpha}$. Starting with $N\in\{L(w\cdot \la),M\}$ we can do two things: first apply $T_\alpha$
to $N$ and then consider the result as an $\mathfrak{a}$-module, or first consider $N$ and as
an $\mathfrak{a}$-module and then apply the $\mathfrak{a}$-version of $T_\alpha$ to it.
By \cite[Lemma~3]{KM} (note that the integrality assumption in that
lemma just reflects the general setup of \cite{KM} and is not used in the proof), both these
ways produce isomorphic $\mathfrak{a}$-modules. In the $\mathfrak{sl}(2)$-case it is straightforward
to verify (cf.~\cite[Chapter~5]{Maz}) that the characters of $T_\alpha\, M$ and $L(w\cdot \la)$ coincide.
It follows that the characters of $T_\alpha\, M$ and $L(w\cdot \la)$ coincide in the genral case
and hence $T_\alpha\, M\cong L(w\cdot \la)$, as any simple module in $\OO$ is uniquely determined by
its character.

Now, by the above we know that $L(s_\alpha w\cdot \la)$ is a simple subquotient of $M$. We claim
that $M$ is simple. Indeed, assume that this were not the case. Then $T_\alpha\, M$ cannot be simple,
since $T_\alpha$ is exact and does not annihilate any modules. This contradicts the conclusion in the
previous paragraph.

Thus, we know that $T_\alpha$ is an exact endofunctor of $\OO_{\chi_\la}^{(\alpha)}$ which sends simple
modules to simple modules. Let $K_\alpha$ denote the right adjoint of $T_\alpha$.
By \cite[Theorem~4.1]{HAS}, we have $K_\alpha\cong \mathbf{d}T_\alpha\mathbf{d}$, where
$\mathbf{d}:\OO_{\chi_\la}^{(\alpha)}\to\OO_{\chi_\la}^{(\alpha)}$ 
is the usual duality (i.e., a contravariant autoequivalence which
preserves isoclasses of simple modules, see \cite[3.2]{Hu}). Since $\mathbf{d}$ is exact, it follows
that $K_\alpha$ is exact as well. Furthermore, using the above we compute:
\begin{displaymath}
K_\alpha\, L(w\cdot \la)\cong
\mathbf{d}T_\alpha\mathbf{d}\, L(w\cdot \la)\cong
\mathbf{d}T_\alpha\, L(w\cdot \la)\cong
 \mathbf{d}\,L(s_\alpha w\cdot \la)\cong L(s_\alpha w\cdot \la),
\end{displaymath}
which means that $K_\alpha$ also sends simple modules to simple modules.
Now the fact that $T_\alpha$ and $K_\alpha$ are mutually inverse equivalences of categories
follows by standard arguments using induction on the length of a module and the Short Five Lemma.
This proves \eqref{th:twist.1}, and \eqref{th:twist.2} follows from
\eqref{th:twist.1} by restriction.
\end{proof}

\begin{example}
If $\g=\gl(2)$, then every non-integral block of $\mc O$ is semisimple. For each non-integral
central characters there exist exactly two non-isomorphic simple highest weight modules
(see e.g. \cite[Section~3.2]{Maz}) and $T_\alpha$ swaps them.
\end{example}

\subsection{Parabolic induction from a Levi subalgebra}

For a weight module $M$ and a weight $\mu$ we denote by $M_{\mu}$ the $\mu$-weight space in $M$.
For $\Pi'\subseteq \Pi$ consider the corresponding Levi subalgebra $\mf{l}=\mf{l}(\Pi')$
(which, by definition, contains $\mathfrak{h}$).
Let $\uu$ be the nilpotent radical of $\mf{l}+\mathfrak{b}$, where $\mathfrak{b}$ is the Borel subalgebra
corresponding to $\Phi^+$. For $M\in\OO$ the maximal subspace $M^{\uu}$ of $M$
on which $\uu$ acts trivially inherits the natural structure of an $\mf{l}$-module by restriction.
The following proposition is standard (see e.g.~\cite[Lemma~2]{J}).

\begin{proposition} \label{prop:ind}
Let $\g$ be a semisimple Lie algebra, $\la \in \h^*$, and suppose
that $\g_{[\la]}=\mf{l}$ is a Levi subalgebra of $\g$. Then the parabolic induction functor
$\ind_{\mf{l} + \uu}^\g: {}^{\lv}\OO_{\la} \rightarrow {}^{\g}\OO_{\la}$ is an equivalence
(of highest weight categories) with inverse $\res^\g_{\mf{l},\lambda}: {}^{\g}\OO_{\la}
\rightarrow {}^{\lv}\OO_{\la}$ given by $M \mapsto M^{\uu}$.
\end{proposition}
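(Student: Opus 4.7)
My plan is to treat $(\ind_{\lv+\uu}^\g,\,(-)^{\uu})$ as an adjoint pair whose unit and counit, once shown to be isomorphisms on simple modules, extend to isomorphisms on all objects by the Five Lemma and induction on composition length. Two ingredients will be needed: exactness of $\ind$ (immediate from PBW, which also gives $\ind M_\lv(\mu) = M_\g(\mu)$) and the identifications $\ind L_\lv(w\cdot\la)\cong L_\g(w\cdot\la)$ and $L_\g(w\cdot\la)^\uu\cong L_\lv(w\cdot\la)$ for $w\in W_\la$.

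The algebraic input that activates the hypothesis $\g_{[\la]}=\lv$ is that $W_\la = W(\lv)$ preserves the set $\Phi(\uu)$ (since a simple reflection in a root of $\Pi'$ leaves the coefficient along any root in $\Pi\setminus\Pi'$ unchanged), whence for every $\mu = w\cdot\la$ in the block and every $\beta\in\Phi(\uu)$,
\[
\langle\mu+\rho,\beta^\vee\rangle = \langle\la+\rho,w^{-1}(\beta)^\vee\rangle \notin \Z,
\]
because $w^{-1}(\beta)\in\Phi(\uu)\subseteq\Phi\setminus\Phi_\la$. This uniform non-integrality along all roots of $\uu$ is what makes the standard Jantzen-type argument for simplicity of $\ind L_\lv(\mu)$ work: the module has unique simple head $L_\g(\mu)$, and any other composition factor $L_\g(\nu)$ satisfies $\nu<\mu$, $\nu\in W\cdot\la$, and $\mu-\nu = \eta+\xi$ with $\eta\in\Z^{\geq 0}\Phi(\lv)^+$ and $\xi\in\Z^{\geq 0}\Phi(\uu)$; tracking the action of the center of $U(\lv)$ on the Jacquet module, the non-integrality forces $\xi = 0$, reducing to composition factors of $L_\lv(\mu)$ in ${}^\lv\OO_\la$, of which there are none besides $L_\lv(\mu)$. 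The dual identification $L_\g(\mu)^\uu\cong L_\lv(\mu)$ then follows formally, for instance by applying the adjunction $\mathrm{Hom}_\g(\ind(-),L_\g(\mu))\cong\mathrm{Hom}_\lv(-,L_\g(\mu)^\uu)$ together with the equivalence just established on simples.

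With the unit and counit isomorphisms on simples of both blocks, and every object of $\OO$ having finite length, the Five Lemma and induction on length extend the isomorphisms to all objects, yielding the claimed equivalence of highest weight categories. I expect the main obstacle to be the simplicity step $\ind L_\lv(\mu)\cong L_\g(\mu)$: ruling out composition factors $L_\g(\nu)$ with a genuine $\Phi(\uu)$-component in $\mu-\nu$ is precisely where the Levi hypothesis $\g_{[\la]}=\lv$, which guarantees both the preservation of $\Phi(\uu)$ by $W_\la$ and the uniform non-integrality of $\mu$ along $\Phi(\uu)$, is indispensable.
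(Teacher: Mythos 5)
Your argument is correct and is essentially the standard one: the paper itself dispenses with Proposition~\ref{prop:ind} by citing Jantzen, but your strategy (Vermas go to Vermas by PBW, $\ind L_{\lv}(\mu)\cong L_{\g}(\mu)$ because linkage together with the non-integrality of $\langle\mu+\rho,\beta^{\vee}\rangle$ for $\beta\in\Phi(\uu)$ kills any $\uu$-component of $\mu-\nu$, the dual statement for $(-)^{\uu}$ by adjunction, then induction on length with the Five Lemma) is exactly the proof the paper writes out for its superalgebra generalization, Proposition~\ref{prop:ind:equiv}. The only cosmetic difference is that you phrase the key exclusion step via the $U(\lv)$-central character on the Jacquet module rather than via the $W_{\la}$-invariance of $\Phi_{\la}$ as in the paper; both correctly force $\xi=0$.
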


Denote by $\g_{[\la]}'=[\g_{[\la]}, \g_{[\la]}]$ the derived
subalgebra of $\g_{[\la]}$. Forgetting the action of the center of $\g_{[\la]}$ we see that
${}^{\lv}\OO_{\la}$ is equivalent to a block of ${}^{\g_{[\la]}'}\OO$ which is integral by construction.

\subsection{A sequence of reductions}

\begin{remark}\label{rem:interchange}
Let $\g=\gl(m)$ with simple system
$\{\epsilon_1-\epsilon_2,\ldots,\epsilon_{m-1}-\epsilon_m\}$. Let
$\la=\sum_{i=1}^m\la_i\epsilon_i$ be a weight and let
$\alpha=\epsilon_i-\epsilon_{i+1}$ be a simple root. We have
\begin{align*}
s_\alpha\cdot\la =
\la_1\epsilon_1+\ldots+\la_{i-1}\epsilon_{i-1}+(\la_{i+1}-1)\epsilon_{i}
+(\la_{i}+1)\epsilon_{i+1}+\la_{i+2}\epsilon_{i+2}+\ldots+
\la_{m}\epsilon_{m}.
\end{align*}
Hence $s_\alpha$ has the effect of interchanging the
$i$-th and the $i+1$-st coefficients of $\la$ modulo $\Z$.
\end{remark}

\begin{proposition} \label{prop:seq}
Let $\g=\gl(m)$ and $\la =\la_0 \in \h^*$. There exists
$\alpha_1, \ldots, \alpha_n \in \Pi$ such that for
$\la_{k}:=(s_{\alpha_k}s_{\alpha_{k-1}}\cdots s_{\alpha_1})
\cdot \la$, $1\le k \le n$, we have:
\begin{enumerate}[$($i$)$]
\item\label{prop:seq.1} $\langle \la_{k-1}, \alpha_{k}^\vee \rangle \not\in
\Z$ for $k=1,\ldots,n$;
\item\label{prop:seq.2} $\g_{[\la_n]}$ is a Levi subalgebra of $\g$.
\end{enumerate}
\end{proposition}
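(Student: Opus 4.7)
The plan is to recast the statement as a sorting problem on the tuple of residue classes $([\mu_1], \ldots, [\mu_m])$ in $\C/\Z$, where $\mu = \sum \mu_i \epsilon_i$. For the simple root $\alpha = \epsilon_i - \epsilon_{i+1}$, one has $\langle \mu, \alpha^\vee \rangle = \mu_i - \mu_{i+1}$, so condition \eqref{prop:seq.1} amounts to $[\mu_i] \ne [\mu_{i+1}]$, while by Remark \ref{rem:interchange} the action of $s_\alpha$ swaps the $i$-th and $(i{+}1)$-st entries of the residue tuple and leaves the other entries unchanged. Furthermore, $\g_{[\mu]}$ is a Levi subalgebra of $\gl(m)$ precisely when, for each class $c \in \C/\Z$, the set $\{i : [\mu_i] = c\}$ is an interval of consecutive indices, since standard Levis of $\gl(m)$ are in bijection with subsets of $\Pi$. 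The task therefore reduces to the following combinatorial claim: any finite sequence $(c_1, \ldots, c_m)$ in $\C/\Z$ can be transformed into a sequence in which equal classes occupy consecutive positions, using adjacent transpositions each of which exchanges two \emph{distinct} classes.

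The claim can be proved by induction on $m$, via an explicit gathering procedure. If the sequence is already sorted, there is nothing to do. Otherwise, the plan is to move all occurrences of $c_1$ into the leftmost block of positions $1, \ldots, p$, where $p = \#\{i : c_i = c_1\}$, and then recurse on positions $p+1, \ldots, m$. Concretely, suppose positions $1, \ldots, q-1$ already carry class $c_1$ but position $q$ does not (one starts with $q = 2$), and let $r > q$ be the least index with $c_r = c_1$. By minimality of $r$, the entries $c_q, c_{q+1}, \ldots, c_{r-1}$ are all different from $c_1$; consequently the successive reflections $s_{\epsilon_{r-1} - \epsilon_r}, s_{\epsilon_{r-2} - \epsilon_{r-1}}, \ldots, s_{\epsilon_q - \epsilon_{q+1}}$ each exchange $c_1$ with a class different from $c_1$, so condition \eqref{prop:seq.1} is satisfied at every step, and the net effect is to transport the class $c_1$ from position $r$ to position $q$. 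Iterating strictly enlarges the initial block of $c_1$'s, so after at most $m$ iterations positions $1, \ldots, p$ all have class $c_1$; the inductive hypothesis applied to the tail finishes the construction of $\alpha_1, \ldots, \alpha_n$ and produces a final weight $\la_n$ with $\g_{[\la_n]}$ Levi.

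No single step looks subtle: the whole argument rests on the clean dictionary between simple reflections and adjacent swaps supplied by Remark \ref{rem:interchange} and on the standard combinatorial characterization of Levi subalgebras in $\gl(m)$. The only point to monitor during each leftward migration is that every individual swap exchanges distinct classes, and this is automatic from the minimality of $r$. The argument leans essentially on the fact that simple reflections in $S_m$ act as adjacent transpositions on the coefficient tuple, a feature specific to type $A$; this is consistent with the earlier remark in the paper that the present reduction strategy does not cover all non-integral weights in other Lie (super)algebra types.
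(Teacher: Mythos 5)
Your proposal is correct and follows essentially the same route as the paper: both reduce the statement, via Remark~\ref{rem:interchange} and the characterization of Levi subalgebras of $\gl(m)$ by consecutive blocks of indices, to the combinatorial claim that adjacent transpositions of neighboring entries lying in distinct classes modulo $\Z$ suffice to gather each class into an interval. The paper declares that final combinatorial assertion evident, whereas you spell out the explicit gathering induction; the substance is identical.
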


\begin{proof}

Write $\lambda=(\lambda_1,\lambda_2,\dots,\lambda_m)$ if $\la=\sum_{i=1}^m\la_i\epsilon_i$.
Define an equivalence relation $\approx$ on the multiset $\{\lambda_1,\lambda_2,\dots,\lambda_m\}$
via $\la_i\approx \la_j$ if and only if $\la_i-\la_j\in\Z$. Modulo integral shifts,
the claim of the proposition is equivalent
to the assertion that, using a sequence of elementary transpositions, where at each step we can
swap two neighboring elements belonging to different $\approx$-equivalence classes, the sequence
$(\lambda_1,\lambda_2,\dots,\lambda_m)$ can be transformed to a sequence
$(\mu_1,\mu_2,\dots,\mu_m)$ in which all equivalence classes are connected in the sense that
$\mu_i\approx\mu_j$ for some $i<j$ implies $\mu_i\approx\mu_s$ for all $i\leq s\leq j$.
The latter assertion is evident.
%
\end{proof}

\begin{corollary}\label{corequiv}
Let $\g=\gl(m)$ and $\la \in \h^*$. Then $\OO_{\la}$ is equivalent to
an integral block of $\OO$ for some semisimple Lie algebra
(of a perhaps smaller rank).
\end{corollary}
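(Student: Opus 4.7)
The plan is to chain together the three main results of this section, namely Proposition~\ref{prop:seq}, Theorem~\ref{th:twist}, and Proposition~\ref{prop:ind}. Starting from $\OO_\la$, I would first invoke Proposition~\ref{prop:seq} to obtain simple roots $\alpha_1,\ldots,\alpha_n\in\Pi$ and the sequence of intermediate weights $\la_k=(s_{\alpha_k}\cdots s_{\alpha_1})\cdot\la$, with $\g_{[\la_n]}$ a Levi subalgebra of $\g$ and with each pairing $\langle\la_{k-1},\alpha_k^\vee\rangle$ non-integral.

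Next, for each $k=1,\ldots,n$, I would apply Theorem~\ref{th:twist}\eqref{th:twist.2} with the base weight taken to be $\la_{k-1}$, the simple root taken to be $\alpha_k$, and $\mu=\la_{k-1}$ itself; the identity element belongs to $W(\la_{k-1},\alpha_k)$ precisely because condition~\eqref{prop:seq.1} of Proposition~\ref{prop:seq} guarantees $\langle\la_{k-1},\alpha_k^\vee\rangle\not\in\Z$. This yields an equivalence of highest weight categories $T_{\alpha_k}\colon\OO_{\la_{k-1}}\stackrel{\sim}{\longrightarrow}\OO_{s_{\alpha_k}\cdot\la_{k-1}}=\OO_{\la_k}$. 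Composing these $n$ equivalences gives $\OO_\la\simeq\OO_{\la_n}$.

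For the final step, since $\g_{[\la_n]}=\lv$ is a Levi subalgebra by condition~\eqref{prop:seq.2}, Proposition~\ref{prop:ind} supplies an equivalence $\OO_{\la_n}\simeq{}^{\lv}\OO_{\la_n}$. The paragraph immediately following Proposition~\ref{prop:ind} then identifies ${}^{\lv}\OO_{\la_n}$ (after discarding the central action of $\lv$) with an integral block of ${}^{\g_{[\la_n]}'}\OO$, where $\g_{[\la_n]}'=[\g_{[\la_n]},\g_{[\la_n]}]$ is semisimple and of possibly smaller rank than $\g$. Stringing all of these equivalences together produces the claimed equivalence.

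All the substantive work has already been done in the results cited, so I do not anticipate a genuine obstacle; the proof is essentially an assembly of the earlier pieces. The only point requiring care is that the hypothesis of Theorem~\ref{th:twist}\eqref{th:twist.2} must hold at each stage of the chain, and this is precisely what is engineered by condition~\eqref{prop:seq.1} of Proposition~\ref{prop:seq}. In particular, the type-$A$ restriction enters only through Proposition~\ref{prop:seq}, which is where the combinatorial interchange argument of Remark~\ref{rem:interchange} is exploited.
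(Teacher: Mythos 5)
Your proof is correct and follows exactly the paper's own argument: chain the twisting-functor equivalences of Theorem~\ref{th:twist}\eqref{th:twist.2} along the sequence produced by Proposition~\ref{prop:seq}, then finish with the parabolic induction equivalence of Proposition~\ref{prop:ind} and the identification of ${}^{\lv}\OO_{\la_n}$ with an integral block for the semisimple derived subalgebra. Your extra remark verifying that the hypothesis of Theorem~\ref{th:twist}\eqref{th:twist.2} holds at each stage is a welcome explicit check of what the paper leaves implicit.
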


\begin{proof}
Set $\la =\la_0$, and let $\alpha_1, \ldots, \alpha_n \in \Pi$ be a sequence given by Proposition~\ref{prop:seq}.
By Theorem~\ref{th:twist}, applying to $\OO_{\la}$ first the functor $T_{\alpha_1}$, then
$T_{\alpha_2}$ and so on, gives an equivalence from $\OO_{\la}$ to $\OO_{\lambda_n}$.
As $\g_{[\lambda_n]}$ is a Levi subalgebra, the proof is completed applying
Proposition~\ref{prop:ind}.
\end{proof}

Corollary~\ref{corequiv} makes Soergel's equivalence much more explicit in the special
case of the Lie algebra $\gl(m)$. Soergel's approach uses the coinvariant algebra and works for all
semisimple Lie algebras, see \cite{Soe}.

\begin{example}
Let $\g=\gl(3)$ with simple roots $\Pi=\{\epsilon_1-\epsilon_2,\epsilon_2-\epsilon_3\}$. Let
$\la=(1,\frac{1}{2},1)$ and we have $\Phi_{\la}=\{\epsilon_1-\epsilon_3\}$ and $\g_{[\la]}\cong\mf{gl}(2)+\h$
(the latter is not a Levi subalgebra of $\g$). The functor
$T_{\alpha_2}$ induces an equivalence from $\mc{O}_{\lambda}$ to $\mc{O}_{s_{\alpha_2}\cdot\lambda}$.
Furthermore, $\Phi_{s_{\alpha_2}\cdot\la}=\{\epsilon_1-\epsilon_2\}$ and
$\g_{[s_{\alpha_2}\cdot\la]}\cong \mf{gl}(2)\oplus\gl(1)$ is now a
Levi subalgebra.
\end{example}

\subsection{Beyond type $A$}

Theorem~\ref{th:twist} and Proposition~\ref{prop:ind} remain valid in the case when
$\g$ is a Kac-Moody algebra. So for a weight $\la$ which satisfies
the Conditions \eqref{prop:seq.1} and \eqref{prop:seq.2} of Proposition~\ref{prop:seq}, we still
have an equivalence from $\OO_{\la}$ to some integral block (for a Kac-Moody Lie algebra of,
possibly, lower rank). However, for general Lie or Kac-Moody algebras,
Proposition~\ref{prop:seq} holds only for a proper subset of weights in $\h^*$. Here are some examples.

\begin{example}
Let $\{\epsilon_i|1\le i\le n\}$ be the standard orthonormal basis of $\R^n$,
$n\ge 4$. The root system $\Phi$ of type $D_n$ in $\R^n$ has roots $\{\pm\epsilon_i\pm\epsilon_j|i\not=j\}$.
The standard simple system for this root system is given below with its Dynkin diagram:
\begin{center}
\hskip -4cm \setlength{\unitlength}{0.25in}
\begin{picture}(24,4.4)
\put(8,2){\makebox(0,0)[c]{$\bigcirc$}}
\put(10.4,2){\makebox(0,0)[c]{$\bigcirc$}}
\put(14.85,2){\makebox(0,0)[c]{$\bigcirc$}}
\put(17.25,2){\makebox(0,0)[c]{$\bigcirc$}}
\put(19.1,3.9){\makebox(0,0)[c]{$\bigcirc$}}
\put(19.1,0.2){\makebox(0,0)[c]{$\bigcirc$}}
\put(8.3,2){\line(1,0){1.8}}
\put(10.7,2){\line(1,0){1.2}}
\put(13.1,2){\line(1,0){1.5}}
\put(15.1,2){\line(1,0){1.9}}
\put(17.4,2.2){\line(1,1){1.5}}
\put(17.4,1.8){\line(1,-1){1.5}}
\put(12.5,1.95){\makebox(0,0)[c]{$\cdots$}}
\put(7.9,1){\makebox(0,0)[c]{\tiny $\epsilon_1-\epsilon_2$}}
\put(10.5,1){\makebox(0,0)[c]{\tiny $\epsilon_2-\epsilon_3$}}
\put(14.8,1){\makebox(0,0)[c]{\tiny $\epsilon_{n-3}-\epsilon_{n-2}$}}
\put(19,2){\makebox(0,0)[c]{\tiny $\epsilon_{n-2}-\epsilon_{n-1}$}}
\put(20.7,0.2){\makebox(0,0)[c]{\tiny $\epsilon_{n-1}+\epsilon_{n}$}}
\put(20.7,3.9){\makebox(0,0)[c]{\tiny $\epsilon_{n-1}-\epsilon_{n}$}}
\end{picture}
\end{center}
Let
$\la=2\epsilon_1+\epsilon_2+\sum_{i=3}^n\frac{2n-2i+1}{2}\epsilon_i$.
Then $\Phi_{\la}$ has type $A_1\oplus A_1\oplus D_{n-2}$, where the simple
systems for $A_1$, $A_1$, and $D_{n-2}$ are given by
\begin{displaymath}
\{\epsilon_1-\epsilon_2\}, \{\epsilon_1+\epsilon_2\} \text{ and }
\{\epsilon_3-\epsilon_4,\epsilon_4-\epsilon_4,\ldots,\epsilon_{n-1}-\epsilon_n,\epsilon_{n-1}+\epsilon_n\},
\end{displaymath}
respectively. Thus, $\Phi_{\la}$ has rank $n$ and is a proper subset of $\Phi$. Therefore $\Phi_{\la}$
cannot coincide with the set of roots for any Levi subalgebra of a Lie algebra of type $D_n$.
This means that Proposition \ref{prop:seq} does not hold in type $D_n$, $n\ge 4$, in general.
\end{example}

\begin{example}\label{example:E8}
Let $\{\epsilon_i|1\le i\le 8\}$ be the standard orthonormal basis of $\R^8$.
The root system of type $E_8$ has roots
$\{\pm\epsilon_i\pm\epsilon_j|i\not=j\}\cup\{\frac{1}{2}\sum_{i=1}^8a_i\epsilon_i|a_i=\pm
1\text{ and }\prod a_i=1\}$. The standard simple system for this root system is given below with its
Dynkin diagram:
\begin{center}
\hskip -2cm \setlength{\unitlength}{0.25in}
\begin{picture}(24,5.5)
\put(5.6,2){\makebox(0,0)[c]{$\bigcirc$}}
\put(8,2){\makebox(0,0)[c]{$\bigcirc$}}
\put(10.4,2){\makebox(0,0)[c]{$\bigcirc$}}
\put(12.6,2){\makebox(0,0)[c]{$\bigcirc$}}
\put(14.85,2){\makebox(0,0)[c]{$\bigcirc$}}
\put(17.25,2){\makebox(0,0)[c]{$\bigcirc$}}
\put(19.4,2){\makebox(0,0)[c]{$\bigcirc$}}
\put(5.9,2){\line(1,0){1.8}}
\put(8.3,2){\line(1,0){1.8}}
\put(10.7,2){\line(1,0){1.6}}
\put(12.9,2){\line(1,0){1.7}}
\put(15.1,2){\line(1,0){1.9}}
\put(17.5,2){\line(1,0){1.6}}
\put(14.85,2.3){\line(0,1){1.9}}
\put(5.4,1){\makebox(0,0)[c]{\tiny $\epsilon_2-\epsilon_3$}}
\put(14.85,4.5){\makebox(0,0)[c]{$\bigcirc$}}
\put(7.9,1){\makebox(0,0)[c]{\tiny $\epsilon_3-\epsilon_4$}}
\put(10.3,1){\makebox(0,0)[c]{\tiny $\epsilon_4-\epsilon_5$}}
\put(12.6,1){\makebox(0,0)[c]{\tiny $\epsilon_5-\epsilon_6$}}
\put(14.8,1){\makebox(0,0)[c]{\tiny $\epsilon_6-\epsilon_7$}}
\put(17.1,1){\makebox(0,0)[c]{\tiny $\epsilon_7-\epsilon_8$}}
\put(14.8,5.3){\makebox(0,0)[c]{\tiny $\epsilon_7+\epsilon_8$}}
\put(20.3,1){\makebox(0,0)[c]{\tiny $\frac{1}{2}(\epsilon_1+\epsilon_8-\sum_{i=2}^7\epsilon_i)$}}
\end{picture}
\end{center}
Let
$\la=\frac{33}{2}\epsilon_1+2\epsilon_2+\epsilon_3+\frac{9}{2}\epsilon_4+\frac{7}{2}\epsilon_5+\frac{5}{2}\epsilon_6
+\frac{3}{2}\epsilon_7 + \frac{1}{2}\epsilon_8$. One checks that
$\Phi_{\la}$ has type $A_1\oplus E_7$, where the simple systems for $A_1$ and $E_7$ are $\{\epsilon_2-\epsilon_3\}$
and
\begin{displaymath}
\{\epsilon_4-\epsilon_5,
\epsilon_5-\epsilon_6, \epsilon_6-\epsilon_7, \epsilon_7-\epsilon_8,
\epsilon_7+\epsilon_8,
\frac{1}{2}(\epsilon_1+\epsilon_8-\sum_{i=2}^7\epsilon_i),
\epsilon_2+\epsilon_3\},
\end{displaymath}
respectively. Similarly to the previous example, this implies
that Proposition \ref{prop:seq} does not hold in type $E_8$ in general.
\end{example}

\begin{remark}
Similarly to Example \ref{example:E8} one shows that Proposition \ref{prop:seq} does not hold for
$E_6$ and $E_7$ in general. In particular, the proof of \cite[Proposition~A.4]{Ma} is valid
only in type $A$.
\end{remark}

\section{Equivalence of blocks for Lie superalgebras}\label{sec:superLA}

In this section, we generalize the above results
to Lie superalgebras of type $A$.

\subsection{Preliminaries}

Let $\C^{m|n}$ be the complex superspace of dimension $(m|n)$. The
general linear Lie superalgebra $\g=\gl(m|n)$ is the Lie superalgebra
of linear operators on $\C^{m|n}$. Let
$\{e_1,\ldots,e_m\}$ and $\{e_{m+1},\ldots,e_{m+n}\}$ be the
standard bases for the even subspace $\C^{m|0}$ and the odd subspace
$\C^{0|n}$, respectively. Their union is then a homogeneous basis
for $\C^{m|n}$ and we can use it to identify the space of $(m+n)\times (m+n)$ matrices
with $\gl(m+n)$. We let $e_{ij}$, $1\le i,j\le m+n$, denote the
$(i,j)$-th elementary matrix unit.
The Cartan subalgebra of diagonal matrices is denoted by
$\h=\h_{m|n}$, and it is spanned by $\{e_{ii}|1\le i\le m+n\}$. We
denote by $\{\delta_i|1\le i\le m+n\}$ the basis in
$\h^*=\h_{m|n}^*$ dual to $\{e_{ii}|1\le i\le m+n\}$. We let
$\Phi=\{\delta_i-\delta_j|1\le i\neq j\le m+n\}$ be the root system
and $W=\mf{S}_m\times\mf{S}_n$ be
the Weyl group (it acts by permuting the respective coordinates).

Set $[1,k]= \{1,2,\ldots, k\}$. Define the parity function $[1,m+n] \rightarrow \Z_2 =\{0,1\}$,
$i \mapsto \bar{i}$, where
\begin{align*}
\bar{i} =\begin{cases}
0, &\text{ if }1\le i\le m;\\
1, &\text{ if }m< i\le m+n.
\end{cases}
\end{align*}
The supertrace form on $\gl(m|n)$ induces a non-degenerate symmetric
bilinear form $(\cdot|\cdot)$ on $\h^*$ given by
\begin{align*}
(\delta_i|\delta_j)=\begin{cases}
(-1)^{\bar{i}}, &\text{ if }1\le i=j\le m+n; \\
0, &\text{ if }1\le i\neq j\le
m+n.
\end{cases}
\end{align*}
The subalgebra of upper triangular matrices with respect to this
standard basis is called the {\em standard Borel subalgebra} and is
denoted by $\mf{b}^{\text{st}}$. There exist
Borel subalgebras of $\gl(m|n)$ that are not $W$-conjugate to
$\mf{b}^{\text{st}}$. However, applying $W$-conjugation if necessary,
we may assume that a general Borel
subalgebra $\mf b$ satisfies $\mf b_{\bar 0}=\mf b_{\bar 0}^{\rm
st}$, and in such a case $\mf{b}$ is related to $\mf{b}^{\rm st}$ by a
sequence of {\em odd reflections} (see, e.g., \cite[Proposition
1.32]{CWbook}). The set of positive roots of $\mf b$ is denoted by
$\Phi^+_{\mf b}$.

The BGG category $\OO^{\mf b}$ for $\g$ with respect to $\mf b$ is
defined similarly as for Lie algebras. We let $\OO=\OO^{{\mf b}^{\rm
st}}$ denote the BGG category with respect to $\mf{b}^{\rm st}$.
As abstract categories, the categories $\OO^{\mf b}$ and $\OO$ coincide
(and hence are related by the identity functor). The identity functor
obviously sends simple objects to simple objects and projective objects to
projective objects. The category $\OO^{\mf b}$ is a highest weight category.
Standard objects for the highest weight structure are Verma modules
and the identity functor does not send standard objects to standard objects
in general. Another structural family of modules for a highest weight
category is formed by tilting modules.
For $\la\in\h^*$ denote by $M^{\mf b}(\la)$,
$T^{\mf b}(\la)$, and $L^{\mf b}(\la)$ the
Verma, tilting, and simple modules in $\OO^{\mf b}$ with highest
weight $\la$, respectively.
When $\mf b=\mf b^{\rm st}$, we will usually drop
the superscript.
All tilting modules are direct
summands of modules induced from tilting $\gl(m|n)_{\bar 0}$-modules, and this
implies that a $\mf{b}$-tilting module remains a $\mf b'$-tilting for
any other $\mf b'$ (which was first proved in \cite[Proposition~6.9]{CLW}). Therefore,
the identity functor sends tilting modules to tilting modules.

One can associate to a Borel subalgebra $\mf b$ with $\mf b_{\bar
0}=\mf b^{\textrm{st}}_{\bar 0}$ a sequence ${\bf
b}=(b_1,b_2,\ldots,b_{m+n})$ consisting of $m$ $0$s and $n$ $1$s,
called an $0^m1^n$-sequence as follows \cite[Section 6.1]{CLW}. Indeed it is well-known that such a Borel is completely determined by a total ordering of the basis $\{e_i|1\le i\le m+n\}$ such that the total ordering induced on basis elements of the same parity is the standard one. We then attach to $\mf b$ the sequence ${\bf b}$ by letting $b_j$ to be the parity of the $j$th basis element in this total ordering. For
example, the sequence
$\textbf{b}^\textrm{st}=(0^m,1^n)$ corresponds to $\mf{b}^\textrm{st}$.

Our next step is to define the {\em $\mf b$-standard partial order} on $\h^*$.
This is a straightforward generalization of the integral case dealt with in \cite{Br} and \cite[Section 2.3]{CLW}.
Let $\texttt{P}$ denote
the free abelian group with basis $\{\varepsilon_r\vert r\in\C\}$.
We define a partial order on $\texttt{P}$ by declaring $\nu\ge\mu$,
for $\nu,\mu\in \texttt{P}$, if $\nu-\mu$ is a non-negative integral
linear combination of $\varepsilon_r-\varepsilon_{r+1}$, $r \in \C$.

\begin{definition}
Fix a $0^m1^n$-sequence ${\bf b}=(b_1,\ldots, b_{m+n})$. For
$f: [1,m+n] \longrightarrow \C$ and $1\le j\le m+n$, we define
\begin{align*}
\text{wt}^j_{\bf b}(f)
 :=\sum_{j\le i}(-1)^{b_i}\varepsilon_{f(i)}\in \texttt{P},\quad
\text{wt}_{\bf b}(f):=\text{wt}^{1}_{\bf b}(f)\in \texttt{P}.
\end{align*}
Define the {\em standard partial order $\preceq_{\bf b}$ of type ${\bf
b}$} on the set of $\C$-valued functions on $[1,m+n]$ as
follows: $g\preceq_{\bf b}f$ if and only if $\text{wt}_{\bf
b}(g)=\text{wt}_{\bf b}(f)$ and $\text{wt}^j_{\bf
b}(g)\le\text{wt}^j_{\bf b}(f)$ for all $j$.
\end{definition}

Now the {\em $\mf b$-standard partial order} $\preceq_{\mf b}$ on $\h^*$
is defined as follows: Let ${\bf
b}$ be the $0^m1^n$-sequence associated with $\mf b$, and let
$\rho_{\mf b}$ be the Weyl vector normalized as in
\cite[(6.5)]{CLW}. We have a bijection between $\h^*$ and the
$\C$-valued functions on $[1,m+n]$ given by $\la\mapsto
f_\la$, where $f_\la(i)=(\la+\rho_{\mf b}|\delta_i)$. For  $\la,\mu\in\h^*$ we set $\la\preceq_{\mf b}\mu$ if $f_\la\preceq_{\bf b}f_\mu$.

\begin{remark}\label{rem:bruhat}
Let $\g=\gl(m|n)$ and $\la,\mu\in\h^*$ be such that $\la\preceq_{\mf b}\mu$. Suppose that
$\alpha$ is an even simple root for $\mf b$ such that
$\langle\la+\rho_{\mf b},\alpha^\vee\rangle\not\in\Z$. Then
$s_\alpha\cdot\la\preceq_{\mf b} s_\alpha\cdot\mu$.
\end{remark}

\subsection{A block decomposition}\label{sec:blockdec}

As in the case of Lie algebras, the category $\OO^{\mf b}$ has a
block decomposition, which we will describe below. First we note
that the definitions of $\Phi_\la$ and $\g_{[\la]}$ in Section
\ref{sec:setup} also make sense when $\g=\gl(m|n)$. The group
$W_\la$ is then the subgroup of the Weyl group $W$ generated by
$\{s_\alpha|\alpha\in\Phi_\la,\alpha\text{ even}\}$. For
$\la\in\h^*$, we let $\OO^{\mf b}_{\la}$ be the Serre subcategory of
$\OO^{\mf b}$ generated by simple objects of the form $L^{\mf b}(\mu)$
with $\mu=w\cdot(\la-\sum_{j}k_j\alpha_j)$, where $w\in W_{\la}$,
$\{\alpha_j\}$ is a set of mutually orthogonal odd isotropic roots
satisfying $\langle\la+\rho_{\mf b},\alpha_j^\vee\rangle=0$ (and
hence $\alpha_j\in\Phi_\la$), and $k_i\in\Z$. Denote the set of all
such $\mu$ above by $[\la]$ so that we have an equivalence relation
$\sim$ on $\h^*$ with equivalence classes $[\la]$. Note that
$\mu\preceq_{\mf b}\la$ implies that $\mu\in[\la]$.

\begin{proposition}\label{prop:block:decomp}
We have the following decomposition:
\begin{equation}\label{eq55}
{\mc O}^{\mf b}=\bigoplus_{\la\in\h^*/\sim}{\mc O}^{\mf b}_\la.
\end{equation}
\end{proposition}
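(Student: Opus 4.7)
The plan is first to verify that $\sim$ is an equivalence relation, and then to establish the decomposition by controlling the composition factors of the indecomposable projectives $P^{\mf b}(\nu)$ in $\OO^{\mf b}$. Reflexivity of $\sim$ is immediate (take $w=1$ and an empty shift set). Symmetry and transitivity follow because the operations generating $[\la]$---the $W_\la$-dot action and integer shifts by mutually orthogonal isotropic roots in $\Phi_\la$ orthogonal to $\la+\rho_{\mf b}$---are closed under composition and inversion; in particular $W_\mu=W_\la$ for $\mu\in[\la]$ (integrality is preserved), and the allowed isotropic roots transform correctly under both kinds of moves.

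Since $\OO^{\mf b}$ is a highest weight category with enough projectives, the decomposition \eqref{eq55} will follow once we show that every indecomposable projective $P^{\mf b}(\nu)$ has all composition factors of the form $L^{\mf b}(\mu)$ with $\mu\in[\nu]$. By BGG reciprocity, $P^{\mf b}(\nu)$ admits a standard filtration by Verma modules $M^{\mf b}(\xi)$ with $[M^{\mf b}(\xi):L^{\mf b}(\nu)]\neq 0$, so the problem reduces to the following key linkage claim: \emph{if $[M^{\mf b}(\la):L^{\mf b}(\mu)]\neq 0$, then $\mu\in[\la]$.}

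I expect this linkage claim to be the main obstacle, as it is the super analog of the classical integral linkage principle. To prove it I would combine two inputs. First, the central-character constraint $\chi_\mu=\chi_\la$, together with Sergeev's description of $Z(\g)$ for $\glmn$, yields $\mu+\rho_{\mf b}=w(\la+\rho_{\mf b})-\sum_i k_i\alpha_i$ for some $w\in W$, integers $k_i$, and mutually orthogonal isotropic roots $\alpha_i$ orthogonal to $\la+\rho_{\mf b}$. Second, the $\mf b$-standard order forces $\mu\preceq_{\mf b}\la$. Applying Remark~\ref{rem:bruhat} iteratively to a reduced expression of $w$, and using the integral linkage principle for $\g_{\bar 0}$ after restriction, one strips off the non-integral even reflections occurring in $w$; the partial-order constraint $\mu\preceq_{\mf b}\la$ rules out genuine contributions from non-integral reflections, forcing $w$ (modulo the allowed isotropic shifts) to lie in $W_\la$. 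Hence $\mu\in[\la]$.

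With the linkage claim in hand, Ext-vanishing between simples in different $\sim$-classes is automatic: a projective resolution of $L^{\mf b}(\mu)$ can be built from projectives $P^{\mf b}(\xi)$ with $\xi\in[\mu]$, so $\mathrm{Ext}^i_{\OO^{\mf b}}(L^{\mf b}(\mu),L^{\mf b}(\mu'))=0$ whenever $\mu\not\sim\mu'$. Consequently, for any $M\in\OO^{\mf b}$ the projective cover decomposes along $\sim$-classes and this decomposition descends to $M$, giving \eqref{eq55}.
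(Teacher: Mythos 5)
Your overall architecture is sound: checking that $\sim$ is an equivalence relation, reducing the decomposition to a linkage statement for composition factors (or, as the paper does more directly, to the vanishing of $\mathrm{Ext}^1$ between simples in distinct classes), and then deducing the block splitting are all standard and correct. The gap is in the linkage step itself, at two places. First, Sergeev's description of $Z(\glmn)$ does \emph{not} give integer coefficients $k_i$: two weights share a central character precisely when $\mu+\rho_{\mf b}=w(\la+\rho_{\mf b})-\sum_i k_i\alpha_i$ with $k_i\in\C$ arbitrary (already for $\gl(1|1)$ the entire affine line of atypical weights through $\la$ shares $\chi_\la$), so the integrality of the isotropic shifts is part of what must be proved, not an input. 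Second, the assertion that $[M^{\mf b}(\la):L^{\mf b}(\mu)]\neq 0$ forces $\mu\preceq_{\mf b}\la$ is unjustified and is essentially equivalent to the linkage statement you are trying to establish: being a composition factor only gives $\la-\mu\in\Z_{\geq 0}\Phi^+_{\mf b}$ for free, whereas $\mu\preceq_{\mf b}\la$ additionally requires $\mathrm{wt}_{\bf b}(f_\mu)=\mathrm{wt}_{\bf b}(f_\la)$, which already encodes the central-character and integrality conditions simultaneously. Invoking it here is circular, and the subsequent ``stripping off non-integral reflections'' via Remark~\ref{rem:bruhat} is too vague to close that circle.

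The missing ingredient is elementary: a nonsplit extension of one simple by another (or an indecomposable module having $L^{\mf b}(\mu)$ and $L^{\mf b}(\la)$ as subquotients) is an indecomposable weight module, so its support lies in a single coset of the root lattice, whence $\la-\mu\in\Z\Phi\subseteq\sum_k\Z\delta_k$. Combining this with the (complex-coefficient) central character condition is exactly what forces everything to become integral: partitioning $[1,m+n]$ into classes $I^\la_j$ according to whether $(-1)^{\bar{i}}\la_i-(-1)^{\bar{j}}\la_j\in\Z$, the condition $\la-\mu\in\Z\Phi$ yields $I^\la_j=I^\mu_j$ for all $j$, and from this one can replace $w$ by some $w'\in W_\la$, the roots $\alpha_i$ by roots $\beta_j\in\Phi_\la$, and the coefficients $k_i$ by integers $l_j$. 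This is the route the paper takes, and it is the step your proposal needs to supply.
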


\begin{proof}
Clearly ${\rm Hom}_{\mc O^{\mf b}}(L^{\mf
b}(\la),L^{\mf b}(\mu))\not=0$ implies that $\mu=\la$. Hence
we only need to check that ${\rm Ext}^1_{\mc O^{\mf b}}(L^{\mf
b}(\la),L^{\mf b}(\mu))\not=0$ implies that $\mu\in[\la]$.

First, for $\la=\sum_{i=1}^{m+n}\la_i\delta_i\in\h^*$ we define an
equivalence relation $\approx$ on the set $[1, m+n]$
similarly as before: for $i,j\in [1,m+n]$,
\begin{align}\label{equiv:rel:gl}
i\approx j,\quad\text{ if } (-1)^{\bar{i}}\la_i- (-1)^{\bar{j}}\la_j\in\Z.
\end{align}
Denote the equivalence classes by
$I^\la_1,I^\la_2,\ldots,I^\la_{\ell}$, where the numbering is
determined by the condition that $\min\{i|i\in I^\la_j\}<\min\{i|
i\in I^\la_{j+1}\}$, for $1\le j\le \ell-1$.

Suppose that ${\rm Ext}^1_{\mc O^{\mf b}}(L^{\mf b}(\la),L^{\mf
b}(\mu))\not=0$. Clearly, $\la$ and $\mu$ must have the same central
character. We have the following description of the central characters for $\g$, e.g.~\cite[Section~2.2.6]{CWbook}, which is a consequence of the description of the center of $U(\g)$ (see \cite{K, Sv}): Two weights $\la$ and
$\mu$ have the same central character if and only if there exist
$w\in W$ and a set of mutually orthogonal odd roots $\{\alpha_j\}_j$
such that $\mu=w\cdot(\la-\sum_{j}k_j\alpha_j)$ with $\alpha_j$
satisfying
$\langle\la+\rho_{\mf b},\alpha_j^\vee\rangle=0$ and $k_j\in\C$. We
observe that ${\rm wt}_{\bf b}(f_\la)={\rm wt}_{\bf b}(f_\mu)$,
where ${\bf b}$ is the $0^m1^n$-sequence corresponding to $\mf b$.
Looking at the supports of our modules, we obtain that
$\la-\mu\in\Z\Phi\subseteq\sum_{k=1}^{m+n}\Z\delta_k$, and thus
$I^\la_j= I^\mu_j$, for all $1\le j\le \ell$. From this it is
readily seen that one can find $w'\in W_\la$, $\beta_j\in\Phi_\la$,
and $l_j\in\Z$ such that $\langle\la+\rho_{\mf b},\beta_j^\vee\rangle=0$ and
\begin{align*}
\mu+\rho_{\mf b}=w'(\la+\rho_{\mf b}-\sum_{j}l_j\beta_j).
\end{align*}
The proposition is proved.
\end{proof}

\begin{remark}
Theorem \ref{thm:block:decomp} below shows that all summands
in \eqref{eq55} are indecomposable.
\end{remark}

\subsection{On integral weights}

A weight $\la \in \h^*$ is called {\em integral}  if $\langle\la, \alpha^\vee\rangle\in \Z$ for all $\alpha \in \Phi$.
Set
$$
{\bf 1}_{m|n} =\sum_{1\le i \le m+n} (-1)^{\bar{i}} \delta_i.
$$
Then for any integral weight
$\la \in \h^*$, we can write
$\la= {\la}' +a {\bf 1}_{m|n}$, where $a =a(\la) \in \C$ and
${\la}' =\sum_i {\la}_i' \delta_i \in \h^*$
satisfies that ${\la}_i' \in\Z$ for all $i$.
Denote by $V_a$ the one-dimensional $\glmn$-module
of weight $a {\bf 1}_{m|n}$. Tensoring with $V_a$ (and respectively, with $V_{-a}$ in the opposite direction) we
have the following.

\begin{lemma} \label{lem:1D}
There is an equivalence of blocks between $\OO_{\la}^{\mf b}$ and $\OO_{\la'}^{\mf b}$.
\end{lemma}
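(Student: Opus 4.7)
The plan is to exhibit the desired equivalence explicitly as the functor $-\otimes V_a\colon \OO^{\mf b}_{\la'}\to \OO^{\mf b}_{\la}$ (with quasi-inverse $-\otimes V_{-a}$). Since $V_a$ is one-dimensional, tensoring with $V_a$ is manifestly an exact autoequivalence of $\OO^{\mf b}$, it preserves isomorphism classes of Verma, simple and tilting modules up to a weight shift, and more precisely it sends $M^{\mf b}(\mu)$, $L^{\mf b}(\mu)$ and $T^{\mf b}(\mu)$ to $M^{\mf b}(\mu+a{\bf 1}_{m|n})$, $L^{\mf b}(\mu+a{\bf 1}_{m|n})$ and $T^{\mf b}(\mu+a{\bf 1}_{m|n})$, respectively.

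The main point I would verify is that this weight shift matches the block partition of Section~\ref{sec:blockdec}, that is, that $\mu\in[\la']$ if and only if $\mu+a{\bf 1}_{m|n}\in[\la]$. For this I would record two elementary properties of the vector ${\bf 1}_{m|n}$: first, it is $W$-invariant, because $W=\mf S_m\times\mf S_n$ permutes $\{\delta_1,\ldots,\delta_m\}$ and $\{\delta_{m+1},\ldots,\delta_{m+n}\}$ separately; second, from the definition of the bilinear form one computes $({\bf 1}_{m|n}\mid\delta_i)=(-1)^{\bar i}(-1)^{\bar i}=1$ for every $i$, so that $({\bf 1}_{m|n}\mid\alpha)=0$ for every root $\alpha=\delta_i-\delta_j$.

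From these two facts everything falls out: the coroot pairings $\langle\mu,\alpha^\vee\rangle$ and $\langle\mu+a{\bf 1}_{m|n},\alpha^\vee\rangle$ agree on all roots $\alpha$, hence $\Phi_\la=\Phi_{\la'}$, $W_\la=W_{\la'}$, and the set of odd isotropic roots orthogonal to $\la+\rho_{\mf b}$ coincides with the corresponding set for $\la'+\rho_{\mf b}$. Using $W$-invariance of ${\bf 1}_{m|n}$, an element $\mu=w\cdot(\la'-\sum_j k_j\alpha_j)$ in $[\la']$ is then mapped to
\begin{align*}
\mu+a{\bf 1}_{m|n}
&= w(\la'+\rho_{\mf b}-{\textstyle\sum_j}k_j\alpha_j)-\rho_{\mf b}+aw({\bf 1}_{m|n})\\
&= w\cdot\bigl(\la-{\textstyle\sum_j}k_j\alpha_j\bigr)\in[\la],
\end{align*}
and the same computation with $-a$ gives the reverse inclusion. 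Thus $-\otimes V_a$ restricts to an exact equivalence $\OO^{\mf b}_{\la'}\to\OO^{\mf b}_{\la}$ which respects standard, simple and tilting objects, proving the lemma. I do not foresee any serious obstacle; the only point one needs to be slightly careful about is that the integral Weyl group and the ``odd isotropic'' data entering the definition of $[\la]$ are both insensitive to a shift by ${\bf 1}_{m|n}$, and this is precisely what the two bullet properties of ${\bf 1}_{m|n}$ above provide.
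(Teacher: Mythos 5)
Your proposal is correct and follows exactly the paper's route: the paper's entire justification is the sentence preceding the lemma, namely that tensoring with the one-dimensional module $V_a$ (resp. $V_{-a}$) gives the equivalence. You have simply supplied the routine verification the paper leaves implicit --- that ${\bf 1}_{m|n}$ is $W$-invariant and orthogonal to all roots, so the shift by $a{\bf 1}_{m|n}$ matches the block partition of Section~\ref{sec:blockdec} --- and that verification is accurate.
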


While $\la$ is integral in the sense of this paper, $\la'$ (but not $\la$ in general)
is an integral weight in the sense of \cite{Br, CLW}. This lemma assures us that we can work with
either version of integral weights.

\subsection{Parabolic induction functor}

We have the following generalization of Proposition \ref{prop:ind}.

\begin{proposition}\label{prop:ind:equiv}
Let $\mf{b}$ be a Borel subalgebra of $\g=\gl(m|n)$ and let $\la \in
\h^*$. Suppose that $\g_{[\la]}=\mf{l}$ is a Levi subalgebra of
$\g$ and let $\uu$ be the nilradical of $\mf{l}+\mf{b}$. Then the parabolic induction functor
$\ind_{\mf{l}+\mf{u}}^\g:{}^{\lv}\mc{O}_{\la}\rightarrow \mc{O}^{\mf
b}_\la$ is an equivalence of blocks, with inverse equivalence
$\res^\g_{\mf l}:\mc{O}^{\mf b}_\la\rightarrow {}^{\lv}\mc{O}_{\la}$
defined by $M\mapsto M^{\uu}$, where $M^{\uu}$ is the maximal trivial $\uu$-submodule of
$M$. Furthermore, this equivalence
maps Verma modules to Verma modules and hence is an equivalence of
highest weight categories.
\end{proposition}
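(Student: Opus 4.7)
The plan is to adapt the proof of Proposition~\ref{prop:ind} to the super setting, keeping the same two-functor structure: $\ind = \ind_{\mf l + \uu}^\g$ and the $\uu$-invariants functor $\res = (-)^{\uu}$ form a Frobenius reciprocity adjunction, and the goal is to show this adjunction is an equivalence. The first ingredient is that Verma modules correspond under $\ind$: since $\mf b = (\mf b \cap \mf l) \oplus \uu$, transitivity of induction gives
\[
\ind_{\mf l + \uu}^{\g}\, M^{\mf b \cap \mf l}(\mu) \;\cong\; M^{\mf b}(\mu)
\]
for every $\mu \in \h^*$. The hypothesis $\g_{[\la]} = \mf l$ forces $\Phi_\la = \Phi(\mf l)$, so $W_\la$ and the isotropic odd roots that enter the definition of the equivalence class $[\la]$ in Section~\ref{sec:blockdec} are intrinsic to $\mf l$; consequently the classes $[\la]_{\mf l}$ and $[\la]_{\g}$ coincide, and $\ind$ restricts to a functor ${}^{\lv}\OO_\la \to \OO^{\mf b}_\la$ mapping Verma modules bijectively.

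Next I would verify that both the unit and the counit of the adjunction are isomorphisms. For the unit: PBW gives $\ind N \cong U(\uu^-) \otimes_\C N$ as an $(\mf l + \uu^-)$-module, and a direct weight-space computation yields $(\ind N)^{\uu} = 1 \otimes N$, so that $N \xrightarrow{\sim} (\ind N)^{\uu}$. By standard adjunction formalism this already makes $\ind$ fully faithful. For the counit $\epsilon_M\colon \ind(M^{\uu}) \to M$: on Verma modules it is an isomorphism via the Verma correspondence above applied to $(M^{\mf b}(\mu))^{\uu} = M^{\mf b \cap \mf l}(\mu)$, itself a standard PBW computation. Exactness of $\ind$ (again PBW) combined with a d\'evissage through the Verma filtrations available in the highest weight category $\OO^{\mf b}_\la$, together with the Short Five Lemma, then extends the isomorphism to arbitrary objects of $\OO^{\mf b}_\la$. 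Since $\ind$ preserves standard objects and is compatible with the partial orders $\preceq_{\mf b \cap \mf l}$ and $\preceq_{\mf b}$, the resulting equivalence is one of highest weight categories.

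The main obstacle I expect is the d\'evissage step: extending the counit isomorphism from Vermas to arbitrary modules in $\OO^{\mf b}_\la$. The super-specific input needed is the rigidity coming from the block structure: any root $\beta$ of $\uu$ or $\uu^-$ lies outside $\Phi_\la = \Phi(\mf l)$ by construction, so the $\uu$-action can only connect weight spaces whose weights sit in different $\approx$-classes in the sense of \eqref{equiv:rel:gl}. This rigidity, which replaces the simpler $W_\la$-orbit argument used in the Lie algebra case, should force $(-)^{\uu}$ to be exact on $\OO^{\mf b}_\la$ and thereby ensure that $\epsilon_M$ is an isomorphism for all $M$ in the block, not merely for Vermas.
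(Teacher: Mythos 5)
Your overall strategy---Frobenius reciprocity between $\ind_{\mf{l}+\uu}^\g$ and $(-)^{\uu}$, exactness, a base case, then d\'evissage via the Short Five Lemma---is essentially the paper's, and you correctly isolate the decisive input: every root $\beta$ of $\uu$ satisfies $\langle\la,\beta^\vee\rangle\notin\Z$, hence $\beta\notin\Phi_\la=\Phi(\mf{l})$, so a height functional vanishing on $\Phi(\mf{l})$ and strictly positive on the roots of $\uu$ separates the weight set $\la+\Z\Phi(\mf{l})$ of the block from anything reached by a nontrivial application of $U(\uu^-)$. But two steps need repair. First, $(\ind N)^{\uu}=1\otimes N$ is \emph{not} a direct PBW weight-space computation valid for arbitrary $N$: for general parabolic induction the $\uu$-invariants are strictly larger than the top layer (already $M(\la)=\ind_{\h+\uu}^{\gl(2)}\C_\la$ with $\la$ dominant integral has singular vectors below the top). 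The identity holds here only because $N$ lies in the block ${}^{\lv}\OO_{\la}$, so every $\mf b$-singular weight of $\ind N$ lies in $[\la]\subseteq\la+\Z\Phi(\mf{l})$ by Proposition~\ref{prop:block:decomp}, and the height functional then forces such weights to occur only in $1\otimes N$. This is exactly the rigidity you defer to the counit step; it is already needed for full faithfulness.

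Second, the d\'evissage ``through Verma filtrations'' does not reach arbitrary objects of $\OO^{\mf b}_\la$, since arbitrary objects of a highest weight category do not admit Verma flags. The paper instead runs the induction over composition series: it first shows $\ind_{\mf{l}+\uu}^\g L^0(\la)\cong L(\la)$ (any subquotient of $\ind_{\mf{l}+\uu}^\g L^0(\la)$ contains a singular vector whose weight, lying in $[\la]$, must already occur in $1\otimes L^0(\la)$, which is $\mf{l}$-simple) and $\res^\g_{\mf l}L(\la)\cong L^0(\la)$ by adjunction, and then inducts on the length of a module. Your rigidity observation does yield exactness of $(-)^{\uu}$ on the block (indeed $M^{\uu}=\bigoplus_{\mu\in\la+\Z\Phi(\mf{l})}M_\mu$ there), so the argument closes once you either replace Verma flags by composition series, or argue directly that the counit is surjective because $M$ is generated by $M^{\uu}$ (otherwise the quotient would be a nonzero object of the block with no weights in $\la+\Z\Phi(\mf{l})$) and injective because $(-)^{\uu}$ is exact and kills no nonzero object of the block.
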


\begin{proof}
Clearly, the parabolic induction functor sends Verma modules to Verma modules.
We shall now prove that it sends irreducible modules to irreducible
modules.

We have $\langle\la,\beta^\vee\rangle\not\in\Z$ for all roots  $\beta$ of $\mf u$ and so $\beta\not\in\Phi_\la$. Let $L^0(\la)$ be the
irreducible $\mf{l}$-module of highest weight $\la$ and consider the corresponding parabolically induced
module ${\ind_{\mf{l}+\mf{u}}^\g} L^0(\la)$. If
$\mu$ is the weight for a non-zero singular vector in a subquotient of the highest weight module
${\ind_{\mf{l}+\mf{u}}^\g} L^0(\la)$, then we have
$\mu\in[\la]$ by Proposition~
\ref{prop:block:decomp}. Since $\Phi_\la$ is invariant under $W_\la$, it follows that we can write $\mu=\la-\sum_{\alpha\in\Phi^+_{\mf b}\cap\Phi_\la}k_\alpha\alpha$, $k_\alpha\in\Z_+$. Hence any subquotient of ${\ind_{\mf{l}+\mf{u}}^\g} L^0(\la)$
intersects $L^0(\la)$, the latter being a simple $\mf{l}$-module.
This yields that ${\ind_{\mf{l}+\mf{u}}^\g} L^0(\la)$ is simple.

The module $E^0=\res^\g_{\mf l}L(\la)=L(\la)^{\mf u}$ is simple by a highest weight argument.
Indeed, assume that $E^0$ contains some simple submodule $L^0(\mu)$ different from
$L^0(\la)$. Then from the previous paragraph we have
${\ind_{\mf{l}+\mf{u}}^\g} L^0(\la)=L(\la)$ and
${\ind_{\mf{l}+\mf{u}}^\g} L^0(\mu)=L(\mu)$ and, by adjunction,
\begin{displaymath}
\mathrm{Hom}_{\mf{l}+\mf{u}}(L^0(\mu),\res^\g_{\mf l}{\ind_{\mf{l}+\mf{u}}^\g} L^0(\la))=
\mathrm{Hom}_{\g}({\ind_{\mf{l}+\mf{u}}^\g} L^0(\mu), {\ind_{\mf{l}+\mf{u}}^\g} L^0(\la))
=\mathrm{Hom}_{\g}(L(\mu),L(\la))\neq 0,
\end{displaymath}
which implies $\la=\mu$.

We can now conclude that $\res^\g_{\mf l}\ind_{\mf l+\mf u}^\g
L^0(\la)\cong L^0(\la)$ and $\ind_{\mf l+\mf u}^\g \res^\g_{\mf
l}L(\la)\cong L(\la)$. By induction on the length of the modules and
the Short Five Lemma we conclude that $\res^\g_{\mf l}$ and $\ind_{\mf l+\mf
u}^\g$ are mutually inverse equivalences of categories.
\end{proof}

\subsection{Equivalence via odd reflections}

Suppose that $\alpha$ is an isotropic odd simple root of $\mf b$. Let
$\mf{b}'$ be the Borel subalgebra obtained from $\mf b$ by applying
the odd reflection with respect to $\alpha$. The BGG categories $\mc O^{\bf b}$ and $\mc O^{\bf b'}$ are identical, although they are equipped with different highest weight structures, see e.g.~\cite[Section~6]{CLW}. The following lemma follows from the definition of the equivalence class
$[\la]$ in Section~\ref{sec:blockdec}.

\begin{lemma}\label{lem123}
If $\alpha$ is an isotropic simple root and
$\langle\la,\alpha^\vee\rangle\not\in\Z$, then for any $\mu \in
[\la]$ we have $\langle\mu,\alpha^\vee\rangle\not\in\Z$.
\end{lemma}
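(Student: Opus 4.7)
The plan is to prove the stronger congruence $(\mu|\alpha)\equiv(\la|\alpha)\pmod{\Z}$ for every $\mu\in[\la]$; the lemma then follows at once from $(\la|\alpha)\notin\Z$. I interpret $\langle\cdot,\alpha^\vee\rangle$ for the isotropic root $\alpha$ as the bilinear pairing $(\cdot|\alpha)$ on $\h^*$, consistent with the notation used for the $\alpha_j$ in Section~\ref{sec:blockdec}.

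Unpacking the definition of $[\la]$, an arbitrary $\mu\in[\la]$ has the form $\mu=w\cdot\nu$ with $\nu:=\la-\sum_j k_j\alpha_j$, where $w\in W_\la$, $k_j\in\Z$, and the $\alpha_j\in\Phi_\la$ are mutually orthogonal isotropic odd roots. I would attack the congruence in two steps. For the odd shift $\la\mapsto\nu$, the difference $(\nu|\alpha)-(\la|\alpha)$ equals $-\sum_j k_j(\alpha_j|\alpha)$, and since the pairing of any two odd roots of $\gl(m|n)$ takes integer values (a direct check in the basis $\{\delta_i\}$ gives values in $\{-1,0,1\}$), this is an integer. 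For the $W_\la$-twist $\nu\mapsto w\cdot\nu$, I would induct on the length of $w$ as a product of reflections $s_\beta$, $\beta\in\Phi_\la^{\mathrm{even}}$: one step yields $s_\beta\cdot\nu'-\nu'=-\langle\nu'+\rho_{\mf b},\beta^\vee\rangle\,\beta$, so its pairing with $\alpha$ is $-\langle\nu'+\rho_{\mf b},\beta^\vee\rangle(\beta|\alpha)$, and $(\beta|\alpha)\in\Z$ is again a direct check (even--odd pairings of roots of $\gl(m|n)$ take values in $\{-1,0,1\}$).

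The inductive step requires $\langle\nu'+\rho_{\mf b},\beta^\vee\rangle\in\Z$ at every intermediate stage. Initially (at $\nu'=\nu$) this follows because $\langle\la,\beta^\vee\rangle\in\Z$ (since $\beta\in\Phi_\la^{\mathrm{even}}$), because $\langle\alpha_j,\beta^\vee\rangle=\pm(\alpha_j|\beta)\in\Z$ by the previous paragraph, and because $\langle\rho_{\mf b},\beta^\vee\rangle\in\Z$ for every even root $\beta$ under the normalization of \cite[(6.5)]{CLW}. Integrality persists at each subsequent stage: any reflection $s_{\beta'}$ with $\beta'\in\Phi_\la^{\mathrm{even}}$ permutes $\Phi_\la^{\mathrm{even}}$ and shifts $\nu'+\rho_{\mf b}$ by an integer multiple of $\beta'$, so all relevant pairings with coroots in $\Phi_\la^{\mathrm{even}}$ remain integral. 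Chaining the two steps together yields the desired congruence.

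The main point that will require care is the integrality of $\langle\rho_{\mf b},\beta^\vee\rangle$ for every even root $\beta$ when $\mf b$ is non-standard. Since any Borel $\mf b$ with $\mf b_{\bar 0}=\mf b_{\bar 0}^{\mathrm{st}}$ differs from $\mf b^{\mathrm{st}}$ by a finite sequence of odd reflections, the difference $\rho_{\mf b}-\rho^{\mathrm{st}}$ is an integer combination of odd roots; as odd roots pair integrally with even coroots, this reduces the verification to the standard Borel, where the integrality is immediate from the explicit coordinates of $\rho^{\mathrm{st}}$.
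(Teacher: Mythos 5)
Your argument is correct, and it is the natural fleshing-out of what the paper itself leaves unproved: the paper merely asserts that Lemma~\ref{lem123} ``follows from the definition of the equivalence class $[\la]$,'' and your proof is precisely that verification, splitting $\mu=w\cdot(\la-\sum_jk_j\alpha_j)$ into the odd shift and the $W_\la$-twist and checking that each preserves $(\cdot\,|\,\alpha)$ modulo $\Z$. The only points needing care --- integrality of $(\beta|\alpha)$ for roots $\beta$, of $\langle\nu'+\rho_{\mf b},\beta^\vee\rangle$ at each intermediate stage, and of $\langle\rho_{\mf b},\beta^\vee\rangle$ for non-standard $\mf b$ --- are all handled correctly.
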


The following proposition describes the relation between standard (or simple)
modules for the different highest weight structures on $\mc O$
corresponding to $\mf b$ and $\mf b'$.

\begin{proposition}\cite[Lemma 1]{PS} \label{prop:oddref}
Assume that $\alpha$ is an isotropic simple root of $\mf b$ and
$\langle\la,\alpha^\vee\rangle\not\in\Z$. Then for every $\mu \in [\la]$
we have $M^{\mf b}(\mu)=M^{\mf b'}(\mu-\alpha)$ and
$L^{\mf b}(\mu)=L^{\mf b'}(\mu-\alpha)$.
\end{proposition}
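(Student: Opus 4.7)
The plan is to establish the Verma module identity $M^{\mf b}(\mu) = M^{\mf b'}(\mu-\alpha)$ by exhibiting an explicit $\mf b'$-highest weight vector of weight $\mu-\alpha$ inside $M^{\mf b}(\mu)$, and then to deduce the simple module statement from uniqueness of simple quotients. By Lemma \ref{lem123} the condition $\langle\la,\alpha^\vee\rangle \notin \Z$ propagates to every $\mu \in [\la]$, so I may assume $\langle\mu,\alpha^\vee\rangle \neq 0$ throughout the argument.

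Let $v_\mu$ be a highest weight vector of $M^{\mf b}(\mu)$, fix nonzero root vectors $X_{\pm\alpha} \in \g_{\pm\alpha}$, and set $w := X_{-\alpha}v_\mu$. Nonvanishing of $w$ follows by restricting to the subsuperalgebra $\mf s \cong \mf{gl}(1|1)$ generated by $X_{\pm\alpha}$ together with $h_\alpha := [X_\alpha,X_{-\alpha}]$: the hypothesis $\langle\mu,\alpha^\vee\rangle \neq 0$ translates to $\mu(h_\alpha) \neq 0$, and the $\mf s$-submodule of $M^{\mf b}(\mu)$ generated by $v_\mu$ is then the typical two-dimensional Verma, in which $X_{-\alpha}v_\mu$ is manifestly nonzero. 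I next verify that $w$ is annihilated by every $\mf b'$-positive root vector. The odd reflection alters the simple system only by swapping $\alpha$ with $-\alpha$ and replacing each other simple root $\beta$ either by itself or by $\beta+\alpha$. For the new simple root $-\alpha$, one has $X_{-\alpha}^2 v_\mu = \tfrac{1}{2}[X_{-\alpha},X_{-\alpha}]v_\mu = 0$, since $\alpha$ isotropic forces $2\alpha \notin \Phi$ in $\glmn$. For any other new simple root $\gamma$, $X_\gamma w = [X_\gamma,X_{-\alpha}]v_\mu + X_{-\alpha}X_\gamma v_\mu = 0$: the second term vanishes by $\mf b$-highest weight, and the first by a weight check (either $\gamma-\alpha \notin \Phi$, or the commutator lies in a $\mf b$-positive root space that annihilates $v_\mu$).

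The above yields a surjection $M^{\mf b'}(\mu-\alpha) \twoheadrightarrow U(\g)w \subseteq M^{\mf b}(\mu)$. The two Verma characters agree because $\Phi^+_{\mf b'}$ differs from $\Phi^+_{\mf b}$ only by swapping $\alpha$ with $-\alpha$, and the corresponding factor in the super-PBW character formula reduces to the tautology $e^\mu(1+e^{-\alpha}) = e^{\mu-\alpha}(1+e^\alpha)$. Hence the surjection is an isomorphism onto all of $M^{\mf b}(\mu)$. Since $M^{\mf b}(\mu)$ and $M^{\mf b'}(\mu-\alpha)$ are now literally the same $\g$-module, the simple module statement $L^{\mf b}(\mu) = L^{\mf b'}(\mu-\alpha)$ follows from uniqueness of the simple quotient.

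The main obstacle is the second step, namely checking that $w$ is a $\mf b'$-highest weight vector: this is the only place where the combinatorics of odd reflection enters substantively, and one must rule out nontrivial contributions from $[X_\gamma,X_{-\alpha}]$ in cases where $\gamma-\alpha$ happens to be a $\g$-root even though it fails to be $\mf b$-positive. The nonvanishing of $w$ and the character comparison both reduce to rank-one calculations inside $\mf{gl}(1|1)$ and should present no difficulty.
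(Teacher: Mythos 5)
Your argument is correct and follows essentially the same route as the paper: exhibit $w=f_\alpha v_\mu$ as a nonzero $\mf b'$-highest weight vector (nonvanishing via $e_\alpha f_\alpha v_\mu=cv_\mu$ with $c\neq0$, using Lemma~\ref{lem123}) and compare characters to get the Verma identity, then pass to simple quotients. The one step worth making explicit is that this same computation $e_\alpha w=cv_\mu$ shows $U(\g)w=M^{\mf b}(\mu)$, i.e.\ the map $M^{\mf b'}(\mu-\alpha)\to M^{\mf b}(\mu)$ is genuinely surjective; the character equality by itself only gives two containments of characters in opposite directions and does not force the image to be all of $M^{\mf b}(\mu)$.
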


\begin{proof}
Denote by $v_\mu^+$ the $\mf b$-highest weight vector in $M^{\mf
b}(\mu)$ or $L^{\mf b}(\mu)$, and by $f_\alpha$ a root
vector for $-\alpha$. Note that $e_\alpha f_\alpha v_\mu^+$ is a
nonzero multiple of $v_\mu^+$ thanks to
$\langle\mu,\alpha^\vee\rangle\not\in\Z$ (see Lemma~\ref{lem123}). Then it
is straightforward to verify that $f_\alpha v_\mu^+$ is a $\mf
b'$-highest weight vector in both $M^{\mf b}(\mu)$ and $L^{\mf
b}(\mu)$. Hence the irreducibility implies that
$L^{\mf b}(\mu)=L^{\mf b'}(\mu-\alpha)$. Moreover, we have a natural surjective
$\glmn$-homomorphism $M^{\mf b'}(\mu-\alpha) \rightarrow M^{\mf b}(\mu)$,
which must be an isomorphism by a character comparison.
\end{proof}

\subsection{Twisting functors for Lie superalgebras}

For an even simple root $\alpha$ of $\mathfrak{g}$ we can define the corresponding twisting endofunctor
$T_{\alpha}$ of $\mathcal{O}$ in exactly the same way as for Lie algebras, see
Section~\ref{sec2.2}. There is an obvious analogue of \cite[Lemma~3]{KM} for $T_{\alpha}$,
saying that this functor is essentially defined on the level of the
$\mathfrak{sl}(2)$-subalgebra of $\mathfrak{g}$. It follows that the Properties
\eqref{twist1}--\eqref{twist4} in Section~\ref{sec2.2} and \cite[Theorem~4.1]{HAS} transfer mutatis mutandis
to the superalgebra case. In particular, similarly to Theorem~\ref{th:twist}, we have the following.

\begin{proposition}\label{prop:twist}
Let $\la \in \h^*$ and $\alpha$ be an even simple root such that
$\langle \la, \alpha^\vee \rangle \not\in \Z$. Then
the functor $T_\alpha: \OO_\la \rightarrow \OO_{s_\alpha \cdot \la}$ is
an equivalence (as highest weight categories)
sending $M(w\cdot \la)$ to $M(s_\alpha w\cdot \la)$ and $L(w\cdot \la)$ to $L(s_\alpha w\cdot \la)$,
for each $w\in W$.
\end{proposition}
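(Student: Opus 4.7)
The plan is to adapt the proof of Theorem~\ref{th:twist} to the Lie superalgebra setting. The key simplification, already noted by the authors, is that since $\alpha$ is an even simple root, the subalgebra $\mathfrak{a}\subseteq\g$ generated by $\mathfrak{g}_{\pm\alpha}$ is a purely even copy of $\mathfrak{sl}(2)$, so the local analysis reduces verbatim to the classical case. I will use the transferred Properties \eqref{twist1}--\eqref{twist4} and the transferred \cite[Theorem~4.1]{HAS}, together with the superalgebra analogue of \cite[Lemma~3]{KM}.

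First I would verify that $T_\alpha$ is exact on $\OO_\la$. Every simple object $L^{\mf b}(\mu)\in \OO_\la$ has $\mu = w\cdot(\la-\sum_j k_j\alpha_j)$ with $w\in W_\la$ and $\alpha_j\in\Phi_\la$ mutually orthogonal odd isotropic. Since $\alpha\notin \Phi_\la$ is even and the odd roots in $\gl(m|n)$ pair integrally with $\alpha^\vee$ (a direct computation from the form of the $\gl(m|n)$ root system), while $W_\la$ preserves $\Phi\setminus\Phi_\la$, one gets $\langle\mu,\alpha^\vee\rangle\notin\Z$ for all such $\mu$. Classical $\mathfrak{sl}(2)$-theory then says $\mathfrak{g}_{-\alpha}$ acts injectively on every module in $\OO_\la$, so the super analogue of \eqref{twist4} kills $\mathcal{L}_1 T_\alpha$ on this block and exactness follows from \eqref{twist3}.

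Second, the super version of \cite[Lemma~3]{KM} shows $T_\alpha M^{\mf b}(\mu)\cong M^{\mf b}(s_\alpha\cdot\mu)$. To show $T_\alpha L^{\mf b}(w\cdot\la)\cong L^{\mf b}(s_\alpha w\cdot\la)$, I would run the same three-step argument as in the proof of Theorem~\ref{th:twist}: (i) localize $L^{\mf b}(w\cdot\la)$ at a root vector for $-\alpha$ and quotient by the original copy to see that $s_\alpha w\cdot\la$ appears in the support of $M := T_\alpha L^{\mf b}(w\cdot\la)$ and is a $\mf b$-highest weight (using that for an even simple root the reflection $s_\alpha$ permutes $\Phi^+_{\mf b}\setminus\{\alpha\}$, including all odd positive roots); (ii) restrict everything to $\mathfrak{a}$ and invoke the $\mathfrak{sl}(2)$-computation to conclude that the characters of $T_\alpha M$ and $L^{\mf b}(w\cdot\la)$ agree, hence $T_\alpha M\cong L^{\mf b}(w\cdot\la)$ and $T_\alpha$ annihilates nothing; (iii) deduce that $M$ is simple by contradicting non-annihilation via exactness.

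Finally, the right adjoint $K_\alpha\cong \mathbf{d}T_\alpha\mathbf{d}$ (via the super analogue of \cite[Theorem~4.1]{HAS}, with $\mathbf{d}$ the duality on $\OO$) is also exact and sends simples to simples, and the identity $K_\alpha L^{\mf b}(s_\alpha w\cdot \la)\cong L^{\mf b}(w\cdot\la)$ shows the unit and counit of adjunction are nonzero, hence isomorphisms, on all simples of $\OO_\la$ and $\OO_{s_\alpha\cdot\la}$. A standard induction on Jordan--H\"older length combined with the Short Five Lemma upgrades this to natural isomorphisms on the whole blocks, yielding the desired equivalence of highest weight categories. The main obstacle is the block-preservation check in Step~1: it is essential that the $[\la]$-equivalence relation in Section~\ref{sec:blockdec}, which involves both $W_\la$ and odd isotropic shifts, does not cross an $\alpha^\vee$-integrality wall; this is what forces the restriction to even $\alpha$ and uses the specific $\gl(m|n)$ root combinatorics rather than an abstract argument.
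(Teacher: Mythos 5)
Your proof is correct and follows the same route as the paper: the paper's own justification of Proposition~\ref{prop:twist} is simply to transfer the proof of Theorem~\ref{th:twist} mutatis mutandis to $\gl(m|n)$, which is exactly what you carry out (exactness via injectivity of $\mathfrak{g}_{-\alpha}$, the $\mathfrak{sl}(2)$-reduction for simples, and the duality description of the adjoint $K_\alpha$). The one point the paper leaves implicit --- that $\langle\mu,\alpha^\vee\rangle\not\in\Z$ holds for every $\mu\in[\la]$, i.e.\ the even-root analogue of Lemma~\ref{lem123}, which is what makes $T_\alpha$ exact on the whole block and block-preserving --- you identify and verify correctly using the $\gl(m|n)$ root combinatorics.
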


\subsection{Reduction to integral blocks}

Now consider a block $\mathcal O_\la$ with respect to the standard Borel $\mf b^{\text{st}}$ for an arbitrary weight $\la
\in \mathfrak h^*$. We can first apply a sequence of suitable
twisting functors (see Propositions~\ref{prop:seq} and \ref{prop:twist}) to $\mathcal O_\la$
and obtain an equivalent block $\mathcal O_{\widetilde{\la}}$ such
that the congruence classes modulo $\Z$ of $\widetilde{\la}_i$ for $1\le i \le
m$, and respectively for $m+1\le i \le
m+n$, are connected intervals, where $\widetilde{\la}
=\sum_{i=1}^{m+n} \widetilde{\la}_i \delta_i$. Next we can apply a
sequence of suitable odd reflections (see
Proposition~\ref{prop:oddref}) to $\mathcal O_{\widetilde{\la}}$ to
rewrite $\mathcal O_{\widetilde{\la}}$ in the form $\mathcal O_{\widetilde{\la}}^{\mf b'}$
for some non-standard Borel $\mf b'$ so that
$\g_{[\widetilde{\la}]}$ is a Levi subalgebra of $\g$ with respect
to the positive system associated to $\mf b'$. By
Proposition~\ref{prop:ind:equiv}, the block $\mathcal
O_{\widetilde{\la}}^{\mf b'}$ is equivalent to a block of integral
weights for $\g_{[\widetilde{\la}]}$ (or rather its derived
subalgebra). This proves the main result of this note.

\begin{theorem}\label{thmmain}
Every block $\mathcal O_\la$, $\la\in \mathfrak h^*$, is equivalent to an integral
block of $\mathcal O$ for some direct sum of general linear Lie superalgebras.
\end{theorem}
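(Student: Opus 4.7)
The plan is to execute the three-step reduction described in the paragraph immediately preceding the statement: first normalize $\la$ via twisting functors, then change the Borel via odd reflections so that $\g_{[\la]}$ becomes a Levi subalgebra, and finally apply parabolic induction.

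For the first step, write $\la=\sum_{i=1}^{m+n}\la_i\delta_i$ and use the $\approx$-equivalence on $[1,m+n]$ from \eqref{equiv:rel:gl}. I would invoke Proposition~\ref{prop:seq} separately for the even indices $[1,m]$ and the odd indices $[m+1,m+n]$, where $\gl(m)$ and $\gl(n)$ are the type-$A$ Levi factors of the even subalgebra. Each consecutive transposition in that proposition corresponds to a simple reflection $s_{\delta_i-\delta_{i+1}}$ with $\langle\la,(\delta_i-\delta_{i+1})^\vee\rangle\not\in\Z$, so Proposition~\ref{prop:twist} produces an equivalence of blocks via the twisting functor $T_{\delta_i-\delta_{i+1}}$. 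Composing these, I obtain an equivalence $\OO_\la\simeq\OO_{\tilde\la}$ in which the even coordinates $(\tilde\la_1,\ldots,\tilde\la_m)$ and the odd coordinates $(\tilde\la_{m+1},\ldots,\tilde\la_{m+n})$ each group their $\approx$-classes into contiguous intervals.

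For the second step, I would permute the total ordering of $\{e_1,\ldots,e_{m+n}\}$ by a sequence of odd reflections, each across an odd isotropic simple root $\alpha=\delta_i-\delta_j$ with $i$ even and $j$ odd belonging to different $\approx$-classes. Lemma~\ref{lem123} combined with the non-integrality inherited from Step~1 guarantees that Proposition~\ref{prop:oddref} applies and supplies an equivalence of blocks, at the cost of shifting the highest weight by $-\alpha$ at each step. Iterating a greedy procedure, I reach a Borel $\mf{b}'$ whose associated $0^m1^n$-sequence interleaves odd and even indices so that all indices from one $\approx$-class are contiguous. Setting $\lv=\g_{[\tilde\la']}$ for the resulting weight $\tilde\la'$, the integral subalgebra $\lv$ is exactly the Levi of $\mf{b}'$ cut out by these contiguous blocks, so Proposition~\ref{prop:ind:equiv} yields an equivalence of highest weight categories $\OO_{\tilde\la'}^{\mf{b}'}\simeq{}^{\lv}\OO_{\tilde\la'}$. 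The derived subalgebra $[\lv,\lv]$ is a direct sum $\bigoplus_k\gl(m_k|n_k)$ indexed by the $\approx$-classes, and forgetting the action of the center of $\lv$ and applying Lemma~\ref{lem:1D} on each summand identifies ${}^{\lv}\OO_{\tilde\la'}$ with an integral block of $\OO$ for $[\lv,\lv]$.

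The main obstacle is the bookkeeping for Step~2: one must exhibit a concrete sequence of odd reflections that both preserves the non-integrality condition needed for Proposition~\ref{prop:oddref} and terminates at a Borel $\mf{b}'$ for which $\g_{[\tilde\la']}$ is a Levi. Non-integrality persists because the $\approx$-classes are invariant under odd reflections (only the order of the indices and the $-\alpha$-shifts of the highest weight change), and Lemma~\ref{lem123} propagates non-integrality along the whole $\sim$-class. Termination is ensured by the greedy swap procedure above, after which the positive system of $\mf{b}'$ restricted to $\Phi_{\tilde\la'}$ agrees with that of a Levi subalgebra of $\g$ in the chosen ordering, completing the reduction to an integral block.
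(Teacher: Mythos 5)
Your proposal is correct and follows essentially the same three-step reduction as the paper: twisting functors (Propositions~\ref{prop:seq} and \ref{prop:twist}) to make the $\approx$-classes contiguous within the even and odd parts separately, then odd reflections (Proposition~\ref{prop:oddref}, with non-integrality preserved via Lemma~\ref{lem123}) to reach a Borel for which $\g_{[\widetilde{\la}]}$ is a Levi, and finally parabolic induction (Proposition~\ref{prop:ind:equiv}) together with Lemma~\ref{lem:1D}. Your write-up in fact supplies slightly more bookkeeping detail than the paper's own sketch, and the key observation that an odd isotropic simple root $\delta_i-\delta_j$ joining different $\approx$-classes is exactly one with $\langle\la,\alpha^\vee\rangle\not\in\Z$ is the correct justification for each odd-reflection step.
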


Note that, since the Lie superalgebra $\g_{[\widetilde{\la}]}$ is a direct sum of general linear
Lie superalgebras, the main theorem of \cite{CLW} for a general version of Brundan's conjecture
applies (with the help of the easy Lemma~\ref{lem:1D}).
Let us illustrate by example how the weights change following the
equivalences of blocks.

\begin{example}
Let $\g=\gl(5|3)$. Consider the block relative to a standard Borel
for a weight $\la$ whose corresponding ``$\rho$-shifted sequence"
$\{(\la+\rho, \delta_i)\}$ is given by $(8,2.1,6.7, 3,
2.7|5,4.1,9)$ (here and below we use a vertical line to indicate a
parity change, i.e., the appearance of an odd simple root). Via
twisting functors as in Theorem~\ref{th:twist}, we transform the
above weight to a weight $\widetilde{\la}$ with the $\rho$-shifted sequence
$(8,3,2.1,6.7,
2.7|5,9,4.1).$ Applying now several (odd reflection) functors as in
Proposition~\ref{prop:oddref}, we obtain a new weight $\widetilde{\widetilde{\la}}$ with the
$\rho$-shifted sequence
$(8,3|5,9|2.1|4.1|6.7, 2.7)$, i.e., the weight $\widetilde{\widetilde{\la}}$ is now regarded as a weight relative to the Borel subalgebra corresponding to the $0^51^3$-sequence $(00110100)$.  Hence, the Levi
subalgebra of $\g$ with respect to the new Borel
$\widetilde{\widetilde{\bf b}}$ here is $\gl(2|2)\times \gl(1|1)
\times \gl(2)$, and its ``integral weight" corresponds to a
$\rho$-shifted sequence $(8,3|5,9) \times (2.1|4.1) \times (6.7,
2.7).$
\end{example}

\subsection{Indecomposibility of blocks}

Recall from Proposition~\ref{prop:block:decomp} that
${\mc O}^{\mf b}=\oplus_{\la\in\h^*/\sim}{\mc O}^{\mf b}_\la$.

\begin{theorem}\label{thm:block:decomp}
The subcategory $\mc O^{\mf b}_\la$ is indecomposable for every $\la\in\h^*/\sim$.
\end{theorem}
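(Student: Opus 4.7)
The plan is to prove indecomposability by first reducing to the integral case for a single general linear Lie superalgebra, and then exhibiting a connected linkage graph on the simples. By Theorem~\ref{thmmain} combined with Lemma~\ref{lem:1D}, $\mathcal O^{\mf b}_\la$ is equivalent (as an abelian category) to an integral block of $\mathcal O$ for some direct sum $\g'=\bigoplus_k \gl(m_k|n_k)$, so since indecomposability is invariant under equivalence it suffices to treat this case. For the direct sum, $\mathcal O^{\g'}$ factors as the external tensor product $\boxtimes_k \mathcal O^{\gl(m_k|n_k)}$, and integral blocks factor accordingly; a standard argument shows that tensor products of indecomposable $\C$-linear abelian categories are indecomposable (any non-trivial central idempotent of a tensor product would force one on some factor). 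We therefore reduce to showing indecomposability of an integral block $\mathcal O_\la$ for a single $\gl(m|n)$.

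Recall from Proposition~\ref{prop:block:decomp} that simples in $\mathcal O_\la$ are parametrized by $[\la]$, consisting of $\mu=w\cdot(\la-\sum_j k_j\alpha_j)$ with $w\in W$, the $\alpha_j\in\Phi_\la$ mutually orthogonal isotropic odd roots with $\langle\la+\rho_{\mf b},\alpha_j^\vee\rangle=0$, and $k_j\in\Z$. Indecomposability reduces to showing that $L(\la)$ and $L(\mu)$ lie in the same indecomposable summand for each such $\mu$, which we verify through two linkage moves. \emph{(a) Even Weyl reflection:} if $\nu$ is integral and $\alpha$ is an even simple root with $\langle\nu+\rho,\alpha^\vee\rangle\in\Z_{>0}$, the standard singular-vector argument $f_\alpha^{k+1}v_\nu^+\in M(\nu)$ works just as in the semisimple case (it rests only on the even subalgebra $\g_{\bar 0}$), so $L(s_\alpha\cdot \nu)$ is a composition factor of $M(\nu)$. \emph{(b) Isotropic odd shift:} if $\alpha$ is isotropic odd with $\langle\nu+\rho,\alpha^\vee\rangle=0$, pass to a Borel $\mf b'$ (obtained from $\mf b^{\rm st}$ by odd reflections) in which $\alpha$ is simple. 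A direct computation specific to $\gl(m|n)$ gives $\rho_{\mf b'}(h_\alpha)=0$, so the atypicality condition becomes $\nu(h_\alpha)=0$; verifying $e_\beta f_\alpha v_\nu^+=0$ for each positive $\beta$ of $\mf b'$ (reducing to $e_\alpha f_\alpha v_\nu^+=\nu(h_\alpha)v_\nu^+=0$) shows that $f_\alpha v_\nu^+$ is a singular vector of weight $\nu-\alpha$ in $M^{\mf b'}(\nu)$. Hence $L(\nu-\alpha)$ is a composition factor of $M^{\mf b'}(\nu)$, linking $L(\nu)$ and $L(\nu-\alpha)$ in the block. Because $(\alpha,\alpha)=0$, the zero-pairing condition is preserved under the shift, and the move iterates in both directions to reach $\nu-k\alpha$ for any $k\in\Z$.

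Combining (a) and (b) produces a chain of linkages from $\la$ to an arbitrary $\mu=w\cdot(\la-\sum_j k_j\alpha_j)$: first iterate (b) across the mutually orthogonal $\alpha_j$'s to reach $\la-\sum_j k_j\alpha_j$ (the orthogonality of $\{\alpha_j\}$ ensures the zero-pairing condition remains valid after each shift), then iterate (a) to apply $w$. The main obstacle is the isotropic odd shift (b): one must identify a suitable Borel $\mf b'$ in which $\alpha$ becomes simple and exploit the vanishing $\rho_{\mf b'}(h_\alpha)=0$, which is a distinctive feature of general linear Lie superalgebras and is precisely what converts the block-theoretic atypicality condition into a concrete singular-vector existence statement. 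Once this is in place, indecomposability follows immediately.
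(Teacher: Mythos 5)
Your overall architecture coincides with the paper's: reduce to an integral block via Theorem~\ref{thmmain}, then connect all simples $L(\mu)$, $\mu\in[\la]$, by exhibiting singular vectors in Verma modules, using even simple reflections for the Weyl-group part and atypical isotropic roots for the odd shifts. Your step (a), the final combination, and the reduction to a single $\gl(m|n)$ (which you spell out in more detail than the paper does) are all fine.

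The gap is in move (b) when $\alpha$ is not simple for the Borel you start with. You pass to a Borel $\mf b'$ in which $\alpha$ is simple and then use $(\rho_{\mf b'}|\alpha)=0$ to produce the singular vector $f_\alpha v^+$. But the module being linked is $L^{\mf b}(\nu)$, and its $\mf b'$-highest weight $\nu'$ is obtained from $\nu$ by a sequence of odd reflections; one has $\nu'+\rho_{\mf b'}=\nu+\rho_{\mf b}$ only when every intermediate odd reflection is typical for the weight, and when an intermediate reflection at $\beta$ is atypical one gets $\nu'+\rho_{\mf b'}=\nu+\rho_{\mf b}+\beta$. The roots $\beta$ needed to make $\alpha=\delta_i-\delta_{m+j}$ simple are of the form $\delta_k-\delta_{m+j}$, which are \emph{not} orthogonal to $\alpha$, so the condition $(\nu'+\rho_{\mf b'}|\alpha)=0$ can fail even though $(\nu+\rho_{\mf b}|\alpha)=0$. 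Concretely, in $\gl(2|1)$ take $\nu+\rho_{\mf b^{\mathrm{st}}}=(c,c\,|\,c)$ and $\alpha=\delta_1-\delta_3$: the only way to make $\alpha$ simple (keeping the even part of the Borel) is to first reflect at $\delta_2-\delta_3$, which is atypical for $\nu$, and afterwards $(\nu'+\rho_{\mf b'}|\alpha)=-1$, so the claimed singular vector does not exist. The paper avoids this by never changing the Borel in this step: it conjugates $\alpha$ by some $w\in W$ to $\pm\beta$ with $\beta$ simple for $\mf b^{\mathrm{st}}$, uses the already established even linkages to pass from $\mu$ and $\la$ to $w(\mu+\rho)-\rho$ and $w(\la+\rho)-\rho$, and applies the simple-isotropic-root singular-vector argument there; the atypicality condition survives because $W$ preserves the bilinear form. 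Your argument is repaired by substituting this $W$-conjugation for the change of Borel; as written, move (b) fails for non-simple $\alpha$.
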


\begin{proof}
In light of Theorem~\ref{thmmain} and its proof, it suffices to prove the theorem for $\la$ integral
and $\mf b$ the standard Borel subalgebra.

For $\eta, \nu \in \h^*$ write $\eta\smallsmile\nu$, if
$\text{Ext}^1_{\mc O}(L(\eta),L(\nu))\not=0$ or equivalently if $\text{Ext}^1_{\mc O}(L(\nu),L(\eta))\not=0$.
We want to prove that for any given $\mu\in[\la]$ there exists a sequence of weights $\eta_1,\eta_2,\ldots,\eta_p$
with $\eta_1=\la$ and $\eta_p=\mu$ and $\eta_i\smallsmile\eta_{i+1}$, for $1\le i\le p-1$.
We write $\la\circeq\mu$ if such a sequence exists. Note that if there exists an indecomposable
module $M$ of finite length such that $L(\la)$ and $L(\mu)$ are among its composition factors, then $\la\circeq\mu$.

First, suppose that $\mu\in[\la]$ with $s(\la+\rho)=\mu+\rho$, for some simple reflection $s\in W$,
and we have the usual argument as for semisimple Lie algebras. More precisely,
we have $\la+\rho$ and $\mu+\rho$ are identical, except that $(\la+\rho)_i=(\mu+\rho)_{i+1}$
and $(\la+\rho)_{i+1}=(\mu+\rho)_{i}$, for some $i\not=m$. We have either
$(\la+\rho)_i>(\la+\rho)_{i+1}$ or $(\la+\rho)_{i+1}>(\la+\rho)_{i}$. In the first case let $v_\la$ be a highest weight vector
in the Verma module $M(\la)$ and let $f_\alpha$ be the negative root vector corresponding to the simple root
$\alpha=\delta_i-\delta_{i+1}$. Then $f_\alpha^{\langle\la+\rho,\alpha^\vee\rangle}v_\la$ is a singular vector of
weight $\mu$ in $M(\la)$ and hence $\mu\circeq\la$. In the second case we have a singular vector of weight $\la$
in $M(\mu)$, and hence again $\mu\circeq\la$. It follows that if $w(\la+\rho)=\mu+\rho$, for some $w\in W$, then $\mu\circeq\la$.

Now suppose that $\alpha$ is an isotropic root such that $\langle\la+\rho,\alpha^\vee\rangle=0$, and $\la+\rho-\alpha=\mu+\rho$. If $\alpha$
is simple, then $f_\alpha v_\la$ is a singular vector of weight $\mu$ in $M(\la)$. Hence $\mu\circeq\la$. If $-\alpha$ is simple,
we reverse the role of $\la$ and $\mu$ and conclude again that $\mu\circeq\la$.
Now if $\alpha$ is not simple, then we can find an element $w\in W$ such that $w(\alpha)=\pm\beta$, with $\beta$ simple.
This implies that $\mu\circeq w(\mu+\rho)-\rho\circeq w(\la+\rho)-\rho\circeq\la$.

The general case is reduced to a combination of the above steps by the definition of the equivalence class $[\la]$ in Section~\ref{sec:blockdec}.
\end{proof}

\end{document}